\newtheorem{theorem}{Theorem}[section]
\newtheorem{corollary}[theorem]{Corollary}
\newtheorem{lemma}[theorem]{Lemma}
\newtheorem{proposition}[theorem]{Proposition}
\newtheorem{remark}[theorem]{Remark}
\theoremstyle{definition}
\newtheorem{definition}[theorem]{Definition}
\newtheorem{example}[theorem]{Example}
\theoremstyle{remark}
\numberwithin{figure}{section}
\numberwithin{table}{section}
\newcommand*\acknowledgment[1]{%
	\begingroup\noindent
	\rightskip\leftskip
	\begin{flushleft}\textbf{\large Acknowledgment.}\, #1%
		\par\vspace*{1mm}\end{flushleft}\endgroup}
\begin{document}

\title[Digital Topological Complexity of Digital Maps]{Digital Topological Complexity of Digital Maps}

\author{MEL\.{I}H \.{I}S and \.{I}SMET KARACA}
\date{\today}

\address{\textsc{Melih Is}
Ege University\\
Faculty of Sciences\\
Department of Mathematics\\
Izmir, Turkey}
\email{melih.is@ege.edu.tr}
\address{\textsc{Ismet Karaca}
Ege University\\
Faculty of Science\\
Department of Mathematics\\
Izmir, Turkey}
\email{ismet.karaca@ege.edu.tr}

\subjclass[2010]{55M30, 46M20, 68U10, 55R10, 54C15}

\keywords{topological complexity, digital topology, digital fibration, fiber homotopy equivalence}

\begin{abstract}
In this study, we improve the topological complexity computations on digital images with introducing the digital topological complexity computations of a surjective and digitally continuous map between digital images. We also reveal differences in topological complexity computations of maps between digital images and topological spaces. Moreover, we emphasize the importance of the adjacency relation on the domain and the range of a digital map in these computations.
\end{abstract}

\maketitle

\section{Introduction}
\label{intro}
\quad The study of the topological complexity by topological methods has made great progress since it was first introduced by M.Farber \cite{Farber:2003}. The close link between the topological complexity (denoted by TC) and the Lusternik-Schnirelmann category (denoted by cat) has a positive effect on these studies (see \cite{Farber:2008} for general knowledge about the notion topological complexity). Just as it is usual to talk about the LS-category of a topological space, it is also possible to talk about the LS-category of a map between topological spaces (a detailed analysis of the Lusternik-Schnirelman category of a space or a map is in \cite{CorneaLupOpTan:2003}). Hence, it is a natural result to suggest TC of a surjective and continuous map \cite{Pavesic:2019}. Note that TC of a topological space coincides with TC of a map when the surjective and continuous map is identity. From this fact, TC$(Y)$ is the spesific case of TC$(g)$. Let $g : Y \rightarrow Z$ be surjective and continuous. Then we often compute each of TC$(Y)$, cat$(Y)$, $TC(Z)$, cat$(Z)$ and cat$(g)$. Under some different conditions, all of them can be useful for having the exact number of TC$(g)$. When $g$ is a fibration, TC$(g)$ is more familiar for us because the definition of TC$(g)$ can be given by using the notion Schwarz genus \cite{Schwarz:1966}. 

\quad With the inclusion of digital topology \cite{KaracaIs:2018}, studies on topological complexity start to get different results on digital images compared to topological spaces. As the concept of the topological complexity evolves in topological spaces (for example, the introduction of the notion higher topological complexity, denoted by TC$_{n}$, \cite{Rudyak:2010}), interesting properties are obtained on digital images \cite{MelihKaraca2:2020,MelihKaraca3:2020,MelihKaraca4:2020}. After given the important results on cat$_{\kappa,\lambda}(g)$ by \cite{TaneAyse:2020}, the main idea of this study is to introduce the topological complexity of a digital map and to state its related properties. At this point, it is essential to recall that the digital topology does not have a topology structure. More specifically, \text{a digital image} \cite{Boxer:1999} is a couple $(Y,\kappa)$ in $\mathbb{Z}^{n}$ such that $Y \subset \mathbb{Z}^{n}$ is a set of points with the adjacency relation $\kappa$ on itself. This construction is quite different from a topological space and this gives us that the topological complexity computations in digital images are generally unlike those previously computed in topological spaces.  Adjacency is a relation and given on a digital image $Y$: Let $l$ be a positive integer not exceeding $n$. Let $u = (u_{1}, ..., u_{n})$ and $v = (v_{1}, ..., v_{n})$ be any two different elements in $Y$. $u$ and $v$ are \textit{$c_{l}-$adjacent} \cite{Boxer:1999} if we have $|u_{j} - v_{j}| = 1$ for at most $l$ indices $j$, and $|u_{k} - v_{k}| \neq 1$ implies $u_{k} = v_{k}$ for all indices $k$. It is denoted $u \leftrightarrow_{c_{l}} v$ by $u$ is adjacent to $v$. Another notation $u \leftrightarroweq_{c_{l}} v$ is also used and it means that either $u = v$ or $u \leftrightarrow_{c_{l}} v$. In general, we have $n$ distinct adjacencies in $\mathbb{Z}^{n}$. As an example, one adjacency occurs in $\mathbb{Z}$ and this is $c_{1} = 2-$adjacency. In $\mathbb{Z}^{2}$ and $\mathbb{Z}^{3}$, we have two and three adjacencies, respectively. They are $c_{1} = 4$ and $c_{2} = 8$ adjacencies for $n=2$, and $c_{1} = 6$, $c_{2} = 18$ and $c_{3} = 26$ adjacencies for $n=3$.

\quad One of the biggest difference on the topological complexity (similarly the Lusternik-Schnirelmann category) between digital images and topological spaces is that the topological complexity (similarly the Lusternik-Schnirelmann category) is depend on adjacency relations in digital images. For example, let 
\begin{eqnarray*}
	&&Y = \{(1,-1),(1,0),(1,1),(0,1),(-1,1),(-1,0),(-1,-1),(0,-1)\} \subset \mathbb{Z}^{2}
\end{eqnarray*}
be an image. Then TC$(Y,4)$ is $2$ \cite{MelihKaraca2:2020}. However, TC$(Y,8)$ is $1$ since $Y$ is $8-$contractible. For the LS-category of a digital map, the result does not change, i.e, the digital LS-category of a digital map depends on the adjacency relations on the domain and the range of a digital map. In this paper, we show that this result is valid for not only the Lusternik-Schnirelmann category but also the digital topological complexity. Explicitly, the adjacency relation on the domain and the range of a digital map has a power to change the result for TC.

\quad We define TC of a surjective and digitally continuous map by using the digital version of Schwarz genus in this study. Then we have certain computations of TC of digital maps. Using these computations, we give some counterexamples in digital images. Later, we completely explain how the adjacency relations on the domain and the range of a digital map effect the results. In addition, we prove that TC$(g)$ is a fiber homotopy equivalent of digital images.

\section{Preliminaries}
\label{sec:1}
\quad The basic notions and results on digital images is stated in this section. Hereafter, we shortly write $Y$ and $Z$ instead of the digital images $(Y,\kappa)$ and $(Z,\lambda)$. Similarly we write $g$ instead of the digital map $g : (Y,\kappa) \rightarrow (Z,\lambda)$, unless otherwise stated. The section also contains main properties of topological robotics facts in the digital context. 

\quad To compute the topological complexity, a topological space must be path-connected. Unlike topological spaces, the digital version of the notion path-connectedness corresponds to the notion digital connectedness. Let $Y$ be a digital image such that $U = \{u_{0},u_{1},...,u_{r}\}$ is a subset of $Y$. Then $U$ is a \textit{$\kappa-$path} \cite{Boxer:2006} from $u_{0}$ to $u_{r}$ if $u_{i} \leftrightarroweq_{\kappa} u_{i+1}$ for $i = 0,1,\cdots,r-1$. A digital image $Y$ is \textit{$\kappa-$connected} \cite{Ros:1986} if there is a digital $\kappa-$path between any two points in $Y$. A digital map $g$ is \textit{$(\kappa,\lambda)-$continuous} \cite{Boxer:1999,Ros:1986} if and only if $g(u) \leftrightarroweq_{\lambda} g(v)$ whenever $u \leftrightarrow_{\kappa} v$ in $Y$. The notion homeomorphism transforms to the notion digital isomorphism in digital images and this adaptation has no any different concept, i.e., a digital map $g$ is \textit{$(\kappa,\lambda)-$isomorphism} \cite{Boxer2:2006} if $g$ is bijective, $g$ is digitally $(\kappa,\lambda)-$continuous and the inverse of $g$ is digitally $(\lambda,\kappa)-$continuous.

\quad For determining TC of a cartesian product image, it is needed to define an adjacency relation on this image. Let $Y$ and $Z$ be any digital images and $(u_{1},v_{1})$ and $(u_{2},v_{2})$ be two elements in $Y \times Z$. Then it is said that \textit{$(u_{1},v_{1})$ and $(u_{2},v_{2})$ are adjacent in $Y \times Z$} \cite{BoxKar:2012} if one of the four cases $u_{1} = u_{2}$ and $v_{1} = v_{2}$; or $u_{1} = u_{2}$ and $v_{1} \leftrightarrow_{\lambda} v_{2}$; or $u_{1} \leftrightarrow_{\kappa} u_{2}$ and $v_{1} = v_{2}$; or $u_{1} \leftrightarrow_{\kappa} u_{2}$ and $v_{1} \leftrightarrow_{\lambda} v_{2}$ holds. Let $c$ and $d$ be two distinct points in $\mathbb{Z}$. A \textit{digital interval} \cite{Boxer:2006} from $c$ to $d$ is given by the set $[c,d]_{\mathbb{Z}} = \{y \in \mathbb{Z} : c \leq y \leq d\}$. It is clear that the digital interval has only $2-$adjacency. Using the digital interval, we restate the digital path definition. Let $Y$ be a digital image and let $c$, $d \in Y$. Then a digitally $(2,\kappa)-$continuous map $g : [0,m]_{\mathbb{Z}} \rightarrow Y$ with $g(0) = c$ and $g(m) = d$ is said to be a \textit{$\kappa$-path} \cite{Boxer:2006} from $c$ to $d$. Let $u_{i}$ be a point of $Y$ for $i = 0,1,2,...,l$. Then we write $u_{0}u_{1}u_{2}\cdots u_{l-1}u_{l}$ to mean that we have a path of digital images from $u_{0}$ to $u_{l}$ in $Y$ in which the route of the path is in the given order. 

\quad Homotopy theory is extensively used in topologic robotics studies. As an example, TC$(Y,\kappa)$ is $1$ whenever $Y$ is $\kappa-$contractible. For this purpose, we recall the definition of digital homotopy and some concepts related to it. Let $[0,m]_{\mathbb{Z}}$ be any digital interval and let $g$, $h:(Y,\kappa) \rightarrow (Z,\lambda)$ be any two digitally continuous maps. Then they are \textit{$(\kappa,\lambda)-$homotopic} \cite{Boxer:1999} if for all $u \in Y$, there is a digital map $G : Y \times [0,m]_{\mathbb{Z}} \rightarrow Z$ with $G(u,0) = g(u)$ and $G(u,m) = h(u)$, for any fixed $t \in [0,m]_{\mathbb{Z}}$, the digital map $G_{t} : Y \rightarrow Z$ is digitally $(\kappa,\lambda)-$continuous and for any fixed $u \in Y$, the digital map $G_{u}:[0,m]_{\mathbb{Z}} \rightarrow Z$ is digitally
$(2,\lambda)-$continuous. Note that $m$ is the step number of the homotopy in the definition. One can also say that in the digital meaning, $g$ is homotopic to $h$ in $m$ steps. Consider any digitally continuous map $g$. Then it is a \textit{$(\kappa,\lambda)-$homotopy equivalence} \cite{Boxer:2005} if there is digitally continuous $h : Z \rightarrow Y$ satisfies the following conditions: $h \circ g$ and $1_{Y}$ are digitally homotopic to each other and $g \circ h$ and $1_{Z}$ are digitally homotopic to each other. Consider any digital image $Y$. It is called \textit{digitally $\kappa-$contractible} \cite{Boxer:1999} if $1_{Y}$ and some digital constant maps in $Y$ are digitally homotopic to each other. Let $U$ be a subset of a digital image $Y$. Then a digitally continuous map $g : Y \rightarrow U$ is \textit{digitally $\kappa-$retraction} \cite{Boxer:1994} if $g(u) = u$ for all $u \in U$. 

\begin{definition}\cite{KaracaVergili:2011} 
	Let $(Y,\kappa)$ and $(T,\rho)$ be any digital images and $Z$ be a $\lambda-$connected digital image. Let $g : Y \rightarrow Z$ be a digitally continuous and surjective map. Assume that the following holds:\\
	\textbf{1)} The digital map $g^{-1}(z) \rightarrow T$ is a digital isomorphism for all $z \in Z$.\\
	\textbf{2)} There is a $\lambda-$connected subset $V$ of $Z$ for all elements in $Z$ such that \linebreak$\varphi : g^{-1}(V) \rightarrow V \times T$ is $(\kappa,\kappa_{*})-$isomorphism with $\pi_{1} \circ \varphi = g$, where $\pi_{1}$ is projection.\\ 
	Then the quadruple $\xi = (Y, g, Z, T)$ is a \textit{digital fiber bundle}.
\end{definition}

\begin{example}\label{e4}
	Let $Y = [0,3]_{\mathbb{Z}}$, $Z = [0,1]_{\mathbb{Z}}$ and $T = [4,5]_{\mathbb{Z}}$. Consider the surjective and digitally continuous map $g : [0,3]_{\mathbb{Z}} \rightarrow [0,1]_{\mathbb{Z}}$ with $g(y) =\begin{cases}
			0, & y \in [0,1]_{\mathbb{Z}}\\
			1, & y \in [2,3]_{\mathbb{Z}}.
		\end{cases}$.
		
		\textbf{1)} Since $g^{-1}(0) = [0,1]_{\mathbb{Z}}$ and $g^{-1}(1) = [2,3]_{\mathbb{Z}}$, the digital maps $\alpha : g^{-1}(0) \rightarrow [4,5]_{\mathbb{Z}}$, $\alpha(x) = x+4$ and $\beta : g^{-1}(1) \rightarrow [4,5]_{\mathbb{Z}}$, $\beta(x) = x+2$ are digital isomorphisms.
		
		\textbf{2)} For $0 \in Z$, choose $V_{1} = \{0\}$. Then $g^{-1}(V_{1}) = [0,1]_{\mathbb{Z}}$. Hence, the digital map $\varphi_{1} : g^{-1}(V_{1}) \rightarrow V_{1} \times [4,5]_{\mathbb{Z}}$ is a digital isomorphism. Similarly, for $1 \in [0,1]_{\mathbb{Z}}$ and $V_{2} = \{1\}$, we have a digital isomorphism $\varphi_{2} : g^{-1}(V_{2}) \rightarrow V_{2} \times [4,5]_{\mathbb{Z}}$, where $g^{-1}(V_{2}) = [2,3]_{\mathbb{Z}}$. As a result, $(Y,g,Z,T)$ is a fiber bundle in the digital sense. 
\end{example}

\quad Let $g$ be a digital map. Then it is a \textit{digital fibration} \cite{EgeKaraca:2017} if it has, in the digital setting, the homotopy lifting property for every digital image. $g^{-1}(z_{0})$ is said to be the digital fiber for $z_{0} \in Z$. If a digital map $g$ is not a digital fibration, then we say that $\widehat{g} : (T,\rho) \rightarrow (Z,\lambda)$ is a \textit{digital fibrational substitute} \cite{MelihKaraca:2020} of $g$ if $\widehat{g}$ is a fibration of digital images with the property that a diagram 
\begin{displaymath}
\xymatrix{
	Y \ar[r]^{k} \ar[dr]_g &
	T \ar@{.>}[d]^{\widehat{g}} \\
	& Z, }
\end{displaymath}
is commutative for a digital homotopy equivalence $k : Y \rightarrow T$.

\begin{definition}\cite{MelihKaraca:2020}
	Let $g$ be a fibration of digital images and $h, k :(T,\rho) \rightarrow (Y,\kappa)$ be two digital maps. Then they are \textit{fiber homotopic in the digital sense} if there exists $G : T \times [0,m]_{\mathbb{Z}} \rightarrow Y$ that is a homotopy for the digital maps $h$ and $k$ with $g \circ G(y,t) = g \circ h(y)$ for $y \in Y$ and $t \in [0,m]_{\mathbb{Z}}$. 
\end{definition}

\begin{definition}\cite{MelihKaraca:2020}\label{d1}
	Two digital fibrations $g_{1} : (Y_{1},\kappa_{1}) \rightarrow (Z,\lambda)$ and \linebreak $g_{2} : (Y_{2},\kappa_{2}) \rightarrow (Z,\lambda)$ are \textit{fiber homotopy equivalent in the digital sense} if there exist digital maps $h : (Y_{1},\kappa_{1}) \rightarrow (Y_{2},\kappa_{2})$ and $k : (Y_{2},\kappa_{2}) \rightarrow (Y_{1},\kappa_{1})$ with $k \circ h$ and $h \circ k$ are digitally fiber homotopic to identity maps.  
\end{definition}

\begin{example}
	Consider the digital fibration $g : [0,3]_{\mathbb{Z}} \rightarrow [0,1]_{\mathbb{Z}}$ given by $g(y) =\begin{cases}
	0, & y \in [0,1]_{\mathbb{Z}}\\
	1, & y \in [2,3]_{\mathbb{Z}}
	\end{cases}$ in Example \ref{e4}. Similarly, $g^{'} : [-3,0]_{\mathbb{Z}} \rightarrow [0,1]_{\mathbb{Z}}$, $g^{'}(y) =\begin{cases}
	0, & y \in [-3,-2]_{\mathbb{Z}}\\
	1, & y \in [-1,0]_{\mathbb{Z}},
	\end{cases}$
	is a digital fibration. Define $h : [0,3]_{\mathbb{Z}} \rightarrow [-3,0]_{\mathbb{Z}}$, $h(x) = x-3$ and $k : [-3,0]_{\mathbb{Z}} \rightarrow [0,3]_{\mathbb{Z}}$, $k(x) = x+3$. Then we have that $k \circ h = 1_{[0,3]_{\mathbb{Z}}}$ and $h \circ k = 1_{[-3,0]_{\mathbb{Z}}}$. This gives that $k \circ h$ and $h \circ k$ is fiber homotopic to the respective identities in the digital setting which means $g$ and $g^{'}$ are digital fiber homotopy equivalent.
\end{example}

\begin{definition}\cite{MelihKaraca:2020}
	Let $g$ be a digital fibration. Then the digital version of \textit{Schwarz genus} of $g$ is the least positive number $l$ such that $W_{1}, W_{2}, ..., W_{l}$ covers $Z$ and for each $1 \leq i \leq l$, we have $(\lambda,\kappa)-$continuous map $s_{i} : (W_{i},\lambda) \rightarrow (Y,\kappa)$ with $g \circ s_{i} = id_{W_{i}}$. The digital Schwarz genus is denoted by $genus_{\kappa,\lambda}(g)$.
\end{definition}

\quad If $g$ is not a digital fibration, then the Schwarz genus of the digital map $g$ is the digital Schwarz genus of the digital fibrational substitute of $g$. Given any digital images $Y$ and $Z$, the \textit{function space of digital images} \cite{LupOpreaScov:2019} $Z^{Y}$ consists of a set of all digitally continuous functions from $Y$ to $Z$ and $Z^{Y}$ has an adjacency relation such that for all $\alpha$, $\beta \in Z^{Y}$ and $u$, $v \in Y$, $u \leftrightarroweq_{\kappa} v$ implies that $\alpha(u) \leftrightarroweq_{\lambda} \beta(v)$. Let $g$ and $h$ be two digital paths in $Y^{[0,m]_{\mathbb{Z}}}$ with $\rho-$adjacency. Then the \textit{paths $g$ and $h$ are digitally $\rho-$connected} \cite{KaracaIs:2018} if for all $t$ steps, they are $\rho-$connected. 

\begin{definition}\cite{MelihKaraca:2020},\cite{KaracaIs:2018}
	Let $Y$ be a $\kappa-$connected digital image. The \textit{digital topological complexity} TC$(Y,\kappa)$ of $Y$ is the Schwarz genus of the digital map $\pi$, where \linebreak $\pi : Y^{[0,m]_{\mathbb{Z}}} \rightarrow Y \times Y$, $\pi(\alpha) = (\alpha(0),\alpha(m))$, is a digital fibration for any $\alpha \in Y^{[0,m]_{\mathbb{Z}}}$.
\end{definition}
\quad Note that, in the definition, the number of steps for two digital paths $g$ and $h$ does not have to be the same. Let $g$ and $h$ have $s$ and $t$ steps, respectively. Without lost of generality, we choose $s<t$. Then we increase the number of steps in $g$ by adding the final step of $g$ $(t-s)$ times. Therefore, we have the same number of steps and use the definition of the digitally connectedness between two digital paths.

\begin{proposition}\cite{KaracaIs:2018}
	TC$(Y,\kappa) = 1$ if and only $Y$ is both digitally connected and $\kappa-$contractible.
\end{proposition}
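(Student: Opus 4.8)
The plan is to unwind the definition of digital topological complexity as a Schwarz genus and reduce the statement to the existence of a single global section. By the definition of $genus_{\kappa,\lambda}$, we have TC$(Y,\kappa)=1$ exactly when the cover of $Y\times Y$ consisting of the single set $W_{1}=Y\times Y$ admits a $(\lambda',\kappa)$-continuous section of the digital path fibration $\pi : Y^{[0,m]_{\mathbb{Z}}} \to Y\times Y$, $\pi(\alpha)=(\alpha(0),\alpha(m))$; that is, when there is a digitally continuous $s : Y\times Y \to Y^{[0,m]_{\mathbb{Z}}}$ with $\pi\circ s=\mathrm{id}_{Y\times Y}$. So I would prove the two implications separately, each time keeping track of the function-space adjacency on $Y^{[0,m]_{\mathbb{Z}}}$.

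For the forward direction, suppose such a section $s$ exists. For every pair $(a,b)\in Y\times Y$ the element $s(a,b)\in Y^{[0,m]_{\mathbb{Z}}}$ is, by $\pi\circ s=\mathrm{id}$, a $\kappa$-path from $a$ to $b$; hence any two points of $Y$ are joined by a $\kappa$-path and $Y$ is $\kappa$-connected. To obtain contractibility, I fix $y_{0}\in Y$ and set $H(y,t)=s(y,y_{0})(t)$, so that $H(y,0)=y$ and $H(y,m)=y_{0}$. Thus $H$ is a candidate homotopy in $m$ steps from $1_{Y}$ to the constant map at $y_{0}$, and the two continuity conditions follow by feeding adjacencies into the hypotheses: continuity of each $H_{y}$ in $t$ is just the statement that $s(y,y_{0})$ is a digital path, while continuity of each $H_{t}$ in $y$ comes from the function-space adjacency, since $(y,y_{0})\leftrightarrow_{\ }(y',y_{0})$ forces $s(y,y_{0})\leftrightarroweq s(y',y_{0})$ and therefore $s(y,y_{0})(t)\leftrightarroweq_{\kappa}s(y',y_{0})(t)$. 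This makes $Y$ digitally $\kappa$-contractible.

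For the converse, assume $Y$ is $\kappa$-connected and $\kappa$-contractible, and let $F : Y\times[0,m]_{\mathbb{Z}}\to Y$ be a contraction with $F(y,0)=y$ and $F(y,m)=y_{0}$. I would build a section by concatenation: send $(a,b)$ to the $\kappa$-path that runs $F(a,\cdot)$ from $a$ to $y_{0}$ on $[0,m]_{\mathbb{Z}}$ and then the reversed path $t\mapsto F(b,m-t)$ from $y_{0}$ to $b$ reparametrised on $[m,2m]_{\mathbb{Z}}$. Reversal is a digital isomorphism of the interval, so each half is a genuine $\kappa$-path, and the two halves agree at the join (both equal $y_{0}$ at step $m$), so the concatenation is a valid $\kappa$-path. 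After padding the step numbers to the common value $2m$ as the remark following the definition permits, this produces $s(a,b)$ with $s(a,b)(0)=a$ and $s(a,b)(2m)=b$, giving $\pi\circ s=\mathrm{id}$. A single section covering all of $Y\times Y$ forces $genus_{\kappa,\lambda}(\pi)=1$, i.e. TC$(Y,\kappa)=1$.

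The existence of paths and the endpoint bookkeeping are routine; the main obstacle is the continuity verification rather than any computation. In the forward direction one must confirm that the strong function-space adjacency, which demands $\alpha(u)\leftrightarroweq_{\kappa}\beta(v)$ for all $\kappa$-adjacent $u,v$, transfers exactly to the two homotopy conditions on $H$. In the converse one must check that the concatenation $s$ respects the function-space adjacency for every product-adjacent pair $(a,b)\leftrightarrow(a',b')$, and in particular that digital continuity is preserved across the join at step $m$; this gluing is precisely where the digital argument is more delicate than its classical counterpart, and I would handle it by reading off the required adjacencies directly from the joint continuity of $F$ in its two variables.
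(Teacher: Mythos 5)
Your forward implication is correct and is the standard Farber-style argument: a global digitally continuous section $s$ of $\pi$ gives, for each pair $(a,b)$, a $\kappa$-path $s(a,b)$ from $a$ to $b$ (hence $\kappa$-connectedness), and $H(y,t)=s(y,y_{0})(t)$ satisfies exactly the two continuity conditions demanded of a digital homotopy: continuity of each $H_{t}$ follows from the function-space adjacency by taking $u=v=t$, and continuity of each $H_{y}$ is just the statement that $s(y,y_{0})\in Y^{[0,m]_{\mathbb{Z}}}$ is a digital path. Note that the present paper states this proposition without proof, quoting \cite{KaracaIs:2018}, so the comparison below is with the argument of that reference.

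The converse contains a genuine gap, at precisely the point you flagged and then passed over: you propose to verify continuity of the concatenated section \emph{``directly from the joint continuity of $F$ in its two variables''}, but digital $\kappa$-contractibility only provides a Boxer homotopy, which is continuous in each variable \emph{separately} (each $F_{t}$ is $(\kappa,\kappa)$-continuous and each $F_{y}$ is $(2,\kappa)$-continuous), and in digital topology separate continuity does not imply joint continuity with respect to the product adjacency, since that adjacency allows both coordinates to move at once. Concretely, on $Y=[0,2]_{\mathbb{Z}}$ the map defined by $F(y,0)=y$, $F(0,1)=1$, $F(1,1)=2$, $F(2,1)=2$, $F(y,2)=2$ is a digital contraction in the sense of this paper, yet $(0,0)$ and $(1,1)$ are adjacent in $Y\times[0,2]_{\mathbb{Z}}$ while $F(0,0)=0$ and $F(1,1)=2$ are neither equal nor adjacent. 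Under the strong function-space adjacency you yourself set up (where $\alpha\leftrightarroweq\beta$ requires $\alpha(u)\leftrightarroweq_{\kappa}\beta(v)$ for \emph{all} $u\leftrightarroweq v$), continuity of $s(a,b)(t)=F(a,t)$ on the first half in the pair $(a,t)$ is literally joint continuity of $F$, so your verification cannot be carried out from the stated hypothesis. The repair is to use the weaker path-space relation actually employed in \cite{KaracaIs:2018} and recalled in the preliminaries: two paths are close when they are pointwise $\kappa$-adjacent at each step $t$. With that relation your same concatenation works, because for $t\le m$ one needs only $F(a,t)\leftrightarroweq_{\kappa}F(a',t)$, i.e. continuity of $F_{t}$, and symmetrically (via continuity of each $F_{t}$ in the second argument $b$) on the second half; this is exactly how the cited source proves the statement. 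With the strong adjacency, by contrast, your converse would require the strictly stronger hypothesis that $Y$ admits a \emph{jointly} continuous contraction, which is not what $\kappa$-contractibility gives you.
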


\begin{theorem}\cite{KaracaIs:2018}
	TC$(Y,\kappa)$ is, in the digital setting, an homotopy invariance.
\end{theorem}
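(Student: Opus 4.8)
The plan is to prove that digitally homotopy equivalent images have equal digital topological complexity; concretely, if $(Y,\kappa)$ and $(Y',\kappa')$ are digitally homotopy equivalent then it suffices, by symmetry, to establish the single inequality TC$(Y,\kappa) \le $TC$(Y',\kappa')$. Fix the homotopy equivalence: a $(\kappa,\kappa')$-continuous map $f : Y \to Y'$, a $(\kappa',\kappa)$-continuous map $g : Y' \to Y$, and digital homotopies $H : Y \times [0,m]_{\mathbb{Z}} \to Y$ with $H(u,0)=u$ and $H(u,m)=g(f(u))$, together with $H' : Y' \times [0,m]_{\mathbb{Z}} \to Y'$ with $H'(u,0)=u$ and $H'(u,m)=f(g(u))$. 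Assume TC$(Y',\kappa')=k$, so there is a cover $W_1',\dots,W_k'$ of $Y' \times Y'$ and digitally continuous sections $s_i' : W_i' \to (Y')^{[0,m']_{\mathbb{Z}}}$ of the path map $\pi'$, meaning $\pi' \circ s_i' = \mathrm{id}_{W_i'}$.

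First I would transport this cover to $Y \times Y$. Because $f$ is digitally continuous, the map $f \times f : Y \times Y \to Y' \times Y'$ is continuous for the product adjacency, so the preimages $W_i := (f\times f)^{-1}(W_i')$ cover $Y \times Y$. Over each $W_i$ I would construct a section of $\pi$ by concatenating three digital paths. For $(a,b)\in W_i$, let $\alpha_1$ be the path $t \mapsto H(a,t)$, running from $a$ to $g(f(a))$; let $\alpha_2$ be $g\circ s_i'(f(a),f(b))$, which by $\pi'\circ s_i'=\mathrm{id}$ runs in $Y$ from $g(f(a))$ to $g(f(b))$; and let $\alpha_3$ be the reversed path $t\mapsto H(b,m-t)$, running from $g(f(b))$ to $b$. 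Their concatenation $\alpha_1 \ast \alpha_2 \ast \alpha_3$ is a digital path in $Y$ from $a$ to $b$, so defining $s_i(a,b):=\alpha_1 \ast \alpha_2 \ast \alpha_3$ yields $\pi\circ s_i=\mathrm{id}_{W_i}$ by construction. This produces a cover of $Y\times Y$ by $k$ sets carrying sections, giving TC$(Y,\kappa)\le k$.

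The step I expect to be the main obstacle is verifying that each $s_i$ is a genuine digital map into the function space $Y^{[0,M]_{\mathbb{Z}}}$ endowed with its adjacency. Two points demand attention. First, the three pieces have step lengths $m$, $m'$ and $m$, so I would fix the common length $M=2m+m'$ and pad each concatenand with repeated terminal steps, exactly as in the remark after the definition of TC, so that the junctions at $g(f(a))$ and $g(f(b))$ are matched. Second, and more delicately, digital continuity of $s_i$ requires that adjacent inputs in $W_i$ (for the product adjacency) be sent to adjacent paths (in the function-space adjacency), that is, paths that are adjacent step-by-step throughout $[0,M]_{\mathbb{Z}}$; I would reduce this, segment by segment, to the digital continuity of $H$, of $s_i'$, of $g$, and of $f$, invoking the compatibility between the product adjacency and the function-space adjacency recalled in the preliminaries. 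Once $s_i$ is confirmed digitally continuous, the inequality TC$(Y,\kappa)\le$TC$(Y',\kappa')$ stands, and repeating the construction with the roles of $Y$ and $Y'$ interchanged (using $g$ in place of $f$ and $H'$ in place of $H$) gives the reverse inequality, so the two complexities coincide.
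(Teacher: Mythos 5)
This theorem is quoted by the paper from \cite{KaracaIs:2018} and is not proved in the paper itself, so there is no in-paper argument to compare against; judged on its own terms, your proposal is the standard Farber-type domination argument, which is essentially how the cited reference establishes the result. Pulling the cover of $Y' \times Y'$ back along $f \times f$ and splicing $g \circ s_i' \circ (f \times f)$ between the homotopy path $t \mapsto H(a,t)$ and the reversed homotopy path $t \mapsto H(b,m-t)$, with padding to a common step length, is the right digital adaptation, and your single-inequality-plus-symmetry reduction is valid.

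One point you should make explicit rather than defer: the continuity of $(a,b) \mapsto s_i(a,b)$ depends on which adjacency the path space carries, because Boxer's digital homotopy (the definition this paper uses) guarantees only \emph{separate} continuity of $H$ in each variable. Under the pointwise convention recalled in this paper's preliminaries (two paths are $\rho$-connected when they are adjacent ``for all $t$ steps''), your segment-by-segment reduction goes through: $H(a,t) \leftrightarroweq_{\kappa} H(a',t)$ for each fixed $t$ is exactly continuity of $H_t$, and similarly for the $s_i'$ and reversed-homotopy segments. Under the stronger function-space adjacency of Lupton--Oprea--Scoville (which demands $\alpha(t) \leftrightarroweq_{\kappa} \beta(t')$ whenever $|t-t'| \leq 1$, including across the junctions of your concatenation), one would additionally need joint continuity of $H$, which a Boxer homotopy does not automatically provide. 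Since \cite{KaracaIs:2018} and the present paper work with the pointwise convention, your proof is sound as written, but the convention should be named where you invoke ``the compatibility between the product adjacency and the function-space adjacency.''
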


\quad Similar to the digital topological complexity, cat of a digital map is a version of the Schwarz genus in the digital setting. Here, the digital fibration is $\pi^{'} : PY \rightarrow Y$, $\pi^{'}(\alpha) = \alpha(1)$, where $PY$ is defined as the set of all digitally continuous digital paths start at $y_{0} \in Y$. Equivalently, one has the following definition:

\begin{definition}\cite{BorVer:2018}
	Let $Y$ be a digitally connected digital image. Then the \textit{LS-category} cat$_{\kappa}(Y)$ of $Y$ is defined as the least number $l$ for which $Y$ has a covering $V_{1}$, $V_{2}$, $\cdots$, $V_{l}$ with the property that each $V_{i}$ is $\kappa-$contractible in $Y$ for $i = 1,2,\cdots,l$.
\end{definition}

\quad Normally, in this definition, the number of elements in the cover of $Y$ is $l+1$. However, we use $l$ for this number, and hence, our results between TC and cat are more consistent. Another Lusternik-Schnirelmann category definition is given on digital maps:

\begin{definition}\cite{TaneAyse:2020}
	Let $g$ be a digital map. Then the \textit{LS-category} cat$_{\kappa,\lambda}(g)$ of $g$ is defined as the least number $l$ for which $Y$ has a covering $V_{1}$, $V_{2}$, $\cdots$, $V_{l}$ with the property that each $g|_{V_{i}}$ is $(\kappa,\lambda)-$homotopic to a constant map from $Y$ to $Z$ for each $i = 1,2,\cdots,l$.
\end{definition}

\begin{proposition}\cite{MelihKaraca2:2020}\label{p2}
	Let $g$ be a digital fibration. Then $genus_{\kappa,\lambda}(g) \leq \text{cat}_{\lambda}(Z)$. In addition, if $Y$ is digitally $\kappa-$contractible, then $genus_{\kappa,\lambda}(g) = \text{cat}_{\lambda}(Z)$. 
\end{proposition}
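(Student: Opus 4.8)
The plan is to adapt Schwarz's classical comparison between sectional category (genus) and Lusternik--Schnirelmann category to the digital setting, using the digital homotopy lifting property of the fibration $g$ as the single engine that drives both assertions. The inequality is proved by converting a categorical cover of $Z$ into a family of local sections of $g$; the reverse inequality, under the contractibility hypothesis on $Y$, is proved by transporting a contracting homotopy of $Y$ down through $g$ to exhibit each piece of a genus cover as $\lambda-$contractible in $Z$.

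For $genus_{\kappa,\lambda}(g) \leq \text{cat}_{\lambda}(Z)$, I would start from a categorical cover $V_{1}, \ldots, V_{l}$ of $Z$ realizing $\text{cat}_{\lambda}(Z) = l$, so that for each $i$ the inclusion $j_{i} : V_{i} \hookrightarrow Z$ is $(\lambda,\lambda)-$homotopic in $Z$ to a constant map at some point $z_{i}$. Fix such a digital homotopy $H_{i} : V_{i} \times [0,m]_{\mathbb{Z}} \rightarrow Z$ with $H_{i}(\cdot,0)$ the constant map at $z_{i}$ and $H_{i}(\cdot,m) = j_{i}$. Since $g$ is surjective, choose $y_{i} \in g^{-1}(z_{i})$ and let $\widetilde{c}_{i} : V_{i} \rightarrow Y$ be the constant map at $y_{i}$; this is digitally continuous and satisfies $g \circ \widetilde{c}_{i} = H_{i}(\cdot,0)$. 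Because $g$ is a digital fibration it has the homotopy lifting property for the digital image $V_{i}$, so $H_{i}$ lifts to a digital homotopy $\widetilde{H}_{i} : V_{i} \times [0,m]_{\mathbb{Z}} \rightarrow Y$ with $\widetilde{H}_{i}(\cdot,0) = \widetilde{c}_{i}$ and $g \circ \widetilde{H}_{i} = H_{i}$. Setting $s_{i} := \widetilde{H}_{i}(\cdot,m)$ gives a $(\lambda,\kappa)-$continuous map with $g \circ s_{i} = H_{i}(\cdot,m) = j_{i} = id_{V_{i}}$, i.e.\ a local section over $V_{i}$. Thus $V_{1}, \ldots, V_{l}$ is a cover of $Z$ admitting local sections, whence $genus_{\kappa,\lambda}(g) \leq l$.

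For the equality when $Y$ is $\kappa-$contractible, I would prove the reverse inequality $\text{cat}_{\lambda}(Z) \leq genus_{\kappa,\lambda}(g)$. Take a genus cover $W_{1}, \ldots, W_{l}$ of $Z$ with local sections $s_{i} : W_{i} \rightarrow Y$, $g \circ s_{i} = id_{W_{i}}$, where $l = genus_{\kappa,\lambda}(g)$. Let $D : Y \times [0,m]_{\mathbb{Z}} \rightarrow Y$ be a contracting homotopy with $D(\cdot,0) = 1_{Y}$ and $D(\cdot,m)$ constant at some $y_{0}$. For each $i$, the composite $G_{i} := g \circ D \circ (s_{i} \times 1_{[0,m]_{\mathbb{Z}}}) : W_{i} \times [0,m]_{\mathbb{Z}} \rightarrow Z$ is digitally continuous, and it satisfies $G_{i}(\cdot,0) = g \circ s_{i} = j_{i}$ (the inclusion of $W_{i}$ in $Z$) and $G_{i}(\cdot,m) = g(y_{0})$ (a constant map). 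Hence each $j_{i}$ is $(\lambda,\lambda)-$homotopic in $Z$ to a constant, so $W_{i}$ is $\lambda-$contractible in $Z$ and $W_{1}, \ldots, W_{l}$ is a categorical cover; therefore $\text{cat}_{\lambda}(Z) \leq l$. Combining with the first part yields equality.

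The routine part is the bookkeeping; the genuinely delicate point is the digital homotopy lifting step together with the need to keep the step numbers of all homotopies aligned. Unlike the continuous case, a digital homotopy carries an explicit step count $m$, so when I invoke the fibration property and later compose with $D$ I must ensure the domains $V_{i} \times [0,m]_{\mathbb{Z}}$ match and, if necessary, pad the shorter homotopy by repeating its terminal slice (as described after the definition of TC) so that all maps are defined on a common interval. I also need to verify at each stage that the composites $g \circ \widetilde{c}_{i}$, $g \circ \widetilde{H}_{i}$, and $g \circ D \circ (s_{i} \times 1_{[0,m]_{\mathbb{Z}}})$ are genuinely digitally continuous in both variables, which follows from continuity of $g$, $D$, the $s_{i}$, and the product-adjacency structure on $Y \times [0,m]_{\mathbb{Z}}$, but is precisely the place where the adjacency relations must be checked carefully.
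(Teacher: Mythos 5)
Your proof is correct: both halves are the standard Schwarz-type argument — lifting a reversed nullhomotopy of each categorical piece of $Z$ through the digital fibration to manufacture a local section, and pushing a contracting homotopy of $Y$ down via $g \circ D \circ (s_{i} \times 1)$ to make each genus piece $\lambda$-contractible in $Z$ — and the digital details (separate continuity in each variable, homotopy reversal, choosing $y_{i} \in g^{-1}(z_{i})$ by surjectivity) all check out. Note that the paper itself only cites this proposition from an external reference and contains no proof of it, so there is no in-paper argument to compare against; yours is the expected one.
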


\section{Topological Complexity of Maps In Digital Images}
\label{sec:2}
\quad Let $Y$ be any digital image and let $m$ be any positive integer. In Definition 3.10 of \cite{MelihKaraca:2020}, an evaluation map of digital images $[0,m]_{\mathbb{Z}}$ and $Z$ is defined as 
 \begin{eqnarray*}
	&&E^{2,\kappa}_{[0,m]_{\mathbb{Z}},Y} : (Y^{[0,m]_{\mathbb{Z}}} \times [0,m]_{\mathbb{Z}},\kappa_{*}) \longrightarrow (Y,\kappa) \\
	&&\hspace*{2.7cm} (\alpha,t) \longmapsto E^{2,\kappa}_{[0,m]_{\mathbb{Z}},Y}(\alpha,t) = \alpha(t),
\end{eqnarray*}
where $Y^{[0,m]_{\mathbb{Z}}} \times [0,m]_{\mathbb{Z}}$ has $\kappa_{*}-$adjacency. To simplify the notations, for $E_{0,Y}(\alpha)$ and $E_{m,Y}(\alpha)$, we understand $E^{2,\kappa}_{[0,m]_{\mathbb{Z}},Y}(\alpha,0)$ and $E^{2,\kappa}_{[0,m]_{\mathbb{Z}},Y}(\alpha,0)$, respectively.
\begin{definition}
	Let $g$ be a digitally continuous and surjective map, where $Y$ and $Z$ are digitally connected images. For any $m \in \mathbb{Z}$, let \[\pi_{g}^{\kappa,\lambda} : (Y^{[0,m]_{\mathbb{Z}}},\kappa_{\ast}) \rightarrow (Y \times Z,\lambda_{\ast})\] be a digital map defined as \[\pi_{g}^{\kappa,\lambda}(\alpha) = (E_{0,Y}(\alpha),g \circ E_{m,Y}(\alpha)) = (id_{Y}^{\kappa,\kappa} \times g^{\kappa,\lambda}) \circ \pi^{2,\kappa}(\alpha),\] where $\pi^{2,\kappa}(\alpha) = (E_{0,Y}(\alpha),E_{m,Y}(\alpha))$. Then the digital topological complexity of the digital map $g$ is defined by \[TC(g)^{\kappa,\lambda} := genus_{\kappa_{\ast},\lambda_{\ast}}(\pi_{g}^{\kappa,\lambda}).\]
\end{definition}

\quad In the definition of digital version of TC$(g)$, we have two cases. First, $g$ can be a fibration of digital images. Since identity map and evaluation map are digital fibrations, $\pi_{g}^{\kappa,\lambda}$ is also a fibration of digital images. If $g$ is not a digital fibration, we use surjective and digitally continuous fibrational substitute of $g$ instead. Now we consider the digital map $g$ as the digital identity map $id_{Y} : Y \rightarrow Y$. Then we get
\begin{eqnarray*}
	\pi_{g}^{\kappa,\lambda} = \pi_{id_{Y}}^{\kappa,\kappa} = (id_{Y} \times id_{Y}) \circ \pi = \pi.
\end{eqnarray*}
As a conclusion, TC$(g)^{\kappa,\kappa} = \text{TC}(Y,\kappa)$ for $g = id_{Y}$. With this result, we have another generalization of TC$(Y,\kappa)$ besides TC$_{n}(Y,\kappa)$.

\begin{example}\label{e1}
	Assume that $g : Y \rightarrow \{z_{0}\}$ is a constant map of digital images (also a fibration in the digital sense). Note that $z_{0}$ does not have to be an element of $Y$. This map is clearly both digitally continuous and surjective. By the definition, we have that 
	\begin{eqnarray*}
		&&\pi_{g}^{\kappa,2} : Y^{[0,m]_{\mathbb{Z}}} \longrightarrow Y \times \{z_{0}\} \\
		&&\hspace*{1.6cm} \alpha \longmapsto \pi_{g}^{\kappa,2}(\alpha) = (\alpha(0),z_{0}).
	\end{eqnarray*}
    Define the digital map
    \begin{eqnarray*}
    	&&s : Y \times \{z_{0}\} \longrightarrow Y^{[0,m]_{\mathbb{Z}}}\\
    	&&\hspace*{1.0cm} (y,z_{0}) \longmapsto s(y,z_{0}) = \epsilon_{y},
    \end{eqnarray*}
    where $\epsilon_{y}$ is the constant digital path at $y$. $s$ is clearly digitally continuous by the definition of adjacency product and we get
    \begin{eqnarray*}
    	\pi_{g}^{\kappa,2} \circ s(y,z_{0}) = \pi_{g}^{\kappa,2}(s(y,z_{0})) = \pi_{g}^{\kappa,2}(\epsilon_{y}) = (y,z_{0}) = id_{Y \times \{z_{0}\}}. 
    \end{eqnarray*}
    As a result, TC$(g)^{\kappa,2} = 1$ when $g$ denotes a digital constant map.
\end{example}

\begin{remark}
	In Example \ref{e1}, if one considers the consant map $g$ with a larger codomain, i.e. for any $z_{0} \in Z$, $g : Y \rightarrow Z$ with $g(y) = z_{0}$, then the result is not valid because of the fact that $g$ is not surjective. 
\end{remark} 

\begin{example} \label{e2}
   Consider the projection map $p_{1} : Y \times Z \rightarrow Y$, $p_{1}(y,z) = y$, of digital images. We first show that $p_{1}$ is a digital fibration. Let $A$ be an arbitrary digital image such that $g : A \rightarrow Y \times Z$ is a digital map with $g(a) = (y,z)$ for any $a \in A$. Let $H : A \times [0,m]_{\mathbb{Z}} \rightarrow Y$ be a digital homotopy. Assume that $p_{1} \circ g = H \circ i$ for an inclusion $i : A \rightarrow A \times [0,m]_{\mathbb{Z}}$. Define the digital homotopy
   \begin{eqnarray*}
   	&&\widetilde{H} : A \times [0,m]_{\mathbb{Z}} \longrightarrow Y \times Z \\
   	&&\hspace*{1.0cm} (a,t) \longmapsto \widetilde{H}(a,t) = (H(a,t),p_{2} \circ g(a)).
   \end{eqnarray*}
   Then we find
   \begin{eqnarray*}
   	\widetilde{H} \circ i(a) = \widetilde{H}(a,t) = (H(a,t),p_{2} \circ g(a)) = (p_{1} \circ g(a),p_{2} \circ g(a)) = g(a)
   \end{eqnarray*}
   and
   \begin{eqnarray*}
   	p_{1} \circ \widetilde{H}(a,t) = p_{1}(H(a,t),p_{2} \circ g(a)) = H(a,t).
   \end{eqnarray*} 
   Thus, $p_{1}$ admits the homotopy lifting property of digital images related to an arbitrary image $A$. For $m \in \mathbb{Z}$, we have the digital map 
   \begin{eqnarray*}
   	&&\pi_{p_{1}}^{\kappa_{\ast},\kappa} : (Y \times Z)^{[0,m]_{\mathbb{Z}}} \longrightarrow (Y \times Z) \times Y\\
   	&&\hspace*{1.6cm} \alpha \longmapsto \pi_{p_{1}}^{\kappa_{\ast},\kappa}(\alpha) = (\alpha(0),p_{1}(\alpha(m))).
   \end{eqnarray*}
   To show that TC$(p_{1})^{\kappa_{\ast},\kappa} = 1$, where $\kappa_{\ast}$ is an adjacency relation on $Y \times Z$, we define the digitally continuous map 
   \begin{eqnarray*}
   	s : (Y \times Z) \times Y \rightarrow (Y \times Z)^{[0,m]_{\mathbb{Z}}}
   \end{eqnarray*}
   via $s((y_{1},z_{1}),y_{2}) = \beta$, where $\beta$ is a digital path in $Y \times Z$ from the initial point $(y_{1},z_{1})$ to the end point $(y_{2},z_{0})$ for a fixed $z_{0} \in Z$. Since we have that \begin{eqnarray*}
   	\pi_{p_{1}}^{\kappa_{\ast},\kappa} \circ s((y_{1},z_{1}),y_{2}) &=& \pi_{p_{1}}^{\kappa_{\ast},\kappa}(s((y_{1},z_{1}),y_{2})) = \pi_{p_{1}}^{\kappa_{\ast},\kappa}(\beta) \\
   	&=& (\beta(0),p_{1}(\beta(m))) = ((y_{1},z_{1}),p_{1}(y_{2},z_{0}))\\
   	&=& ((y_{1},z_{1}),y_{2}) \\ 
   	&=& id_{(Y \times Z) \times Y},
   \end{eqnarray*}
   the desired result holds.
\end{example}

\quad Now consider the digital images $Y \subset \mathbb{Z}^{2}$ with $4-$adjacency and $Z = \{(1,0)\}$, where $Y$ consists of the points
\begin{eqnarray*}
	&&y_{1} = (1,-1), \hspace*{0.3cm} y_{2} = (1,0), \hspace*{0.3cm} y_{3} = (1,1), \hspace*{0.3cm} y_{4} = (0,1),\\ &&y_{5} = (-1,1), \hspace*{0.3cm} y_{6} = (-1,0), \hspace*{0.3cm} y_{7} = (-1,-1), \hspace*{0.3cm} y_{8} = (0,-1).
\end{eqnarray*}
Take the digital projection map $p_{1} : (Y,4) \times (Z,2) \rightarrow (Y,4)$ with $p_{1}(y,z) = y$. We find TC$(p_{1})^{\kappa_{\ast},4} = 1$ for an adjacency relation $\kappa_{\ast}$ on $\mathbb{Z}^{3}$ by using Example \ref{e2}. On the other hand, we have cat$_{4}(Y) = 2$ by Example 2.7 of \cite{MelihKaraca2:2020}. Hence, we find
\begin{eqnarray*}
	\text{TC}(p_{1})^{\kappa_{\ast},4} < \text{cat}_{4}(Y).
\end{eqnarray*} 
This quick result shows that Proposition 3.2 of \cite{Pavesic:2019} is only true for topological spaces, not digital images.

\quad In topological spaces, Pavesic \cite{Pavesic:2019} states that for a map $g : Y \rightarrow Z$, the following holds:
\begin{eqnarray*}
	\max\{\text{cat}(Z),\text{sec}(g)\} \leq \text{TC}(g).
\end{eqnarray*}
Let us show that this statement is not true for digital images. For any digital map $(Z,\lambda)$, consider the digital projection map $p_{1} : (Y \times Z,\lambda_{\ast}) \rightarrow (Y,8)$ with $p_{1}(y,z) = y$, where $Y$ consists of the points
\begin{eqnarray*}
	&&y_{1} = (-2,0), \hspace*{0.3cm} y_{2} = (-1,1), \hspace*{0.3cm} y_{3} = (0,1),\\ &&y_{4} = (1,0), \hspace*{0.3cm} y_{5} = (0,-1), \hspace*{0.3cm} y_{6} = (-1,-1).
\end{eqnarray*} 
Then TC$(p_{1})^{\lambda_{\ast},8} = 1$ from Example \ref{e2}. On the other side, $Y$ is not $8-$contractible. This means that cat$_{8}(Y) > 1$. Using the homotopy invariance property of TC in digital setting, we have that TC numbers of $Y$ and the digital image in Theorem 3.5 of \cite{KaracaIs:2018} are the same. Hence, we get TC$(Y,8) = 2$. By Theorem 5.1 of \cite{KaracaIs:2018}, we find cat$_{8}(Y) = 2$. As a concequence we obtain \begin{eqnarray*}
	\text{TC}(p_{1})^{\lambda_{\ast},8} < \text{cat}_{8}(Y) \leq \max\{\text{cat}_{8}(Y),genus_{\lambda_{\ast},8}(p_{1})\}.
\end{eqnarray*}

\quad Examples can be increased by considering certain digital maps. For example, if we choose $h$ as $(Y,\kappa) \rightarrow \Delta_{Y}(Y)$, $h(y) = (y,y)$, then we similarly obtain that the TC$(h)^{\kappa,\kappa_{*}}$ equals $1$. Note here that if we discuss the diagonal map $\Delta : Y \rightarrow Y \times Y$ of digital images, the computation misleads us since $\Delta$ is not surjective. The next result is an example of the case that TC$(g)$ in digital images is different from one.

\begin{example} \label{e3}
	Let $g : [0,3]_{\mathbb{Z}} \rightarrow [0,1]_{\mathbb{Z}}$ be a piecewise map of digital images defined as
	\begin{eqnarray*}
		g(y) =\begin{cases}
			0, & y \in [0,1]_{\mathbb{Z}}\\
			1, & y \in [2,3]_{\mathbb{Z}}.
	\end{cases}
\end{eqnarray*}
    Let $0 < 1 < 2 < 3$ denote the direction of digital paths in $[0,3]_{\mathbb{Z}}$. By Example \ref{e4}, we have that $g$ is a digital fibration. Now, assume that TC$(g)^{2,2} = 1$. Then there exists a digitally continuous map
    \begin{eqnarray*}
    	s : [0,3]_{\mathbb{Z}} \times [0,1]_{\mathbb{Z}} \rightarrow [0,3]_{\mathbb{Z}}^{[0,m]_{\mathbb{Z}}}
    \end{eqnarray*}
    for $m \in \mathbb{Z}$ such that $\pi_{g}^{2,2} \circ s = id_{[0,3]_{\mathbb{Z}} \times [0,1]_{\mathbb{Z}}}$. Since $\pi_{g}^{2,2}(\varepsilon_{0}) = (0,0) = \pi_{g}^{2,2}(01)$, where $\varepsilon_{0}$ is the constant map at $0$, $s(0,0)$ must be equal to $\varepsilon_{0}$ or $01$. If $s(\varepsilon_{0}) = (0,0)$, then this contradicts the fact that $s$ is digitally continuous because we have that $\pi_{g}^{2,2}(012) = (0,1) = \pi_{g}^{2,2}(0123)$. So we get $s(0,0) = 01$. If this process continues, then we explicitly find 
    \begin{eqnarray*}
    	s : && (0,0) \longmapsto 01\\
    	&& (0,1) \longmapsto 012 \\
    	&& (1,0) \longmapsto \varepsilon_{1} \\
    	&& (1,1) \longmapsto 12 \\
    	&& (2,0) \longmapsto 21 \\
    	&& (2,1) \longmapsto \varepsilon_{2}\\
    	&& (3,0) \longmapsto 321 \\
    	&& (3,1) \longmapsto 32,
    \end{eqnarray*}
    where $\varepsilon_{i}$ for $i = 1,2$ are constant maps at $i$. Therefore, the direction changes for the paths $s(2,0) = 21$, $s(3,0) = 321$ and $s(3,1) = 32$. Hence, we get the result TC$(g)^{2,2} > 1$. Now let 
    \begin{eqnarray*}
    	[0,3]_{\mathbb{Z}} \times [0,1]_{\mathbb{Z}} = A_{1} \cup A_{2}
    \end{eqnarray*}
    for $A_{1} = \{(2,1),(3,0),(3,2)\}$ and $A_{2} = \{(y,z) \in [0,3]_{\mathbb{Z}} \times [0,1]_{\mathbb{Z}} : (y,z) \notin A_{1}\}$. Then for $m_{1}$, $m_{2} \in \mathbb{Z}$, we have that the digital maps
    \begin{eqnarray*}
    	s_{1} : A_{1} \rightarrow [0,2]_{\mathbb{Z}}^{[0,m_{1}]_{\mathbb{Z}}} \hspace*{0.5cm} \text{and} \hspace*{0.5cm} s_{2} : A_{2} \rightarrow [0,2]_{\mathbb{Z}}^{[0,m_{2}]_{\mathbb{Z}}}
    \end{eqnarray*}
    are digitally continuous such that 
    \begin{eqnarray*}
    	\pi_{g}^{2,2} \circ s_{1} = id_{A_{1}} \hspace*{0.5cm} \text{and} \hspace*{0.5cm} \pi_{g}^{2,2} \circ s_{2} = id_{A_{2}}
    \end{eqnarray*}
    holds. Consequently, TC$(g)^{2,2} = 2$. 
    \end{example}

\quad As a result of Example \ref{e3}, Theorem 3.6 of \cite{TaneAyse:2020} is not valid when we consider TC instead of cat. Indeed, TC$(g)^{2,2} = 2$ for $(2,2)-$continuous $g :[0,3]_{\mathbb{Z}} \rightarrow [0,1]_{\mathbb{Z}}$ given by
$g(y) =\begin{cases}
		0, & y \in [0,1]_{\mathbb{Z}}\\
		1, & y \in [2,3]_{\mathbb{Z}}.
\end{cases}$ whereas TC$([0,3]_{\mathbb{Z}},2) = 1$ and TC$([0,1]_{\mathbb{Z}},2) = 1$, this is because $[0,3]_{\mathbb{Z}}$ and $[0,1]_{\mathbb{Z}}$ are digitally contractible. Thus, we conclude that
\begin{eqnarray*}
	\text{TC}(g)^{\kappa,\lambda} \nleq \min\{\text{TC}(Y,\kappa),\text{TC}(Z,\lambda)\}
\end{eqnarray*}
for the digitally continuous map $g : Y \rightarrow Z$ such that $Y$ and $Z$ are digitally connected images.

\begin{theorem}
	Let $r : (Y,\kappa) \rightarrow (U,\kappa)$ be a digital retraction. If TC$^{\kappa,\kappa}(r) = 1$, then $genus_{\kappa,\kappa}(r) = 1$.
\end{theorem}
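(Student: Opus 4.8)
The plan is to turn the hypothesis $\mathrm{TC}^{\kappa,\kappa}(r)=1$ into a single global section of the fibration $\pi_{r}^{\kappa,\kappa}$ and then to distil from it an honest digital section of the retraction $r$ itself. By definition $\mathrm{TC}^{\kappa,\kappa}(r)=genus_{\kappa_{\ast},\kappa_{\ast}}(\pi_{r}^{\kappa,\kappa})$, so the value $1$ means that the codomain $Y\times U$ is covered by the single piece $W_{1}=Y\times U$ carrying a $(\kappa_{\ast},\kappa_{\ast})$-continuous map $S:(Y\times U,\kappa_{\ast})\rightarrow (Y^{[0,m]_{\mathbb{Z}}},\kappa_{\ast})$ with $\pi_{r}^{\kappa,\kappa}\circ S=id_{Y\times U}$. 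Reading off the two coordinates of $\pi_{r}^{\kappa,\kappa}(S(y,u))=(y,u)$ gives $E_{0,Y}(S(y,u))=y$ and $r(E_{m,Y}(S(y,u)))=u$ for every $(y,u)\in Y\times U$, and the second identity is the one I would exploit.

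Next I would construct a section of $r$ by fixing a basepoint and evaluating each path at its endpoint. The image $U$ is nonempty, so I pick $y_{0}\in U\subset Y$ and set $s:(U,\kappa)\rightarrow (Y,\kappa)$, $s(u)=E_{m,Y}(S(y_{0},u))$. The endpoint identity gives $r(s(u))=r(E_{m,Y}(S(y_{0},u)))=u$, hence $r\circ s=id_{U}$. To verify that $s$ is $(\kappa,\kappa)$-continuous I would display it as the composite $s=E_{m,Y}\circ S\circ \iota_{y_{0}}$, where $\iota_{y_{0}}:U\rightarrow Y\times U$, $u\mapsto (y_{0},u)$. An adjacency $u\leftrightarrow_{\kappa}u'$ forces $(y_{0},u)$ and $(y_{0},u')$ to be adjacent in $Y\times U$ by the product-adjacency rule with equal first coordinate, so $\iota_{y_{0}}$ is digitally continuous; $S$ is continuous by the choice of section, and $E_{m,Y}$ is continuous as a digital fibration, whence so is $s$.

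Finally I would read off the Schwarz genus. The data $\big(W_{1}=U,\;s\big)$ is exactly a one-element cover of the codomain together with a $(\kappa,\kappa)$-continuous section of $r$, so $genus_{\kappa,\kappa}(r)=1$ as soon as $r$ is a digital fibration. If $r$ is not a fibration I would pass to a digital fibrational substitute $\widehat{r}:(T,\rho)\rightarrow (U,\kappa)$ with $r=\widehat{r}\circ k$ for a digital homotopy equivalence $k:Y\rightarrow T$; then $\widehat{r}\circ(k\circ s)=r\circ s=id_{U}$, so $\widehat{r}$ also admits a global section and $genus_{\kappa,\kappa}(r)=genus_{\rho,\kappa}(\widehat{r})=1$. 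Since the Schwarz genus is always at least $1$, this gives the claim.

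The step I expect to be the real obstacle is not the displayed algebra but the digital bookkeeping inside the continuity claims: one has to check $(\kappa,\kappa_{\ast})$-continuity of the slice inclusion $\iota_{y_{0}}$ and of the composite against the precise product adjacency on $Y\times U$ and the function-space adjacency $\kappa_{\ast}$ on $Y^{[0,m]_{\mathbb{Z}}}$, and confirm that endpoint evaluation respects these relations. A secondary subtlety is the non-fibration case, where one must see that precomposing with $k$ keeps the section digitally continuous and still splits $\widehat{r}$; the retraction hypothesis is convenient here because $U\subset Y$ and $r|_{U}=id_{U}$ keep every intermediate comparison inside $Y$. I would also note that the inclusion $U\hookrightarrow Y$ is itself a section of $r$, so the genuine content of the statement is that the hypothesis $\mathrm{TC}^{\kappa,\kappa}(r)=1$ forces the Schwarz genus to realise this splitting and thereby attain its minimal value $1$.
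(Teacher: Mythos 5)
Your proposal is correct and follows essentially the same route as the paper's proof: fix a basepoint $y_{0}$, restrict the global section of $\pi_{r}^{\kappa,\kappa}$ to the slice $\{y_{0}\}\times U$, and evaluate the resulting paths at their endpoints to obtain a digitally continuous section of $r$ over all of $U$. If anything, your write-up is slightly more careful than the paper's, which asserts the intermediate identity $s_{1}(y_{0},u)(m)=u$ when only $r\bigl(s_{1}(y_{0},u)(m)\bigr)=u$ is guaranteed; your endpoint identity $r\circ E_{m,Y}\circ S\circ\iota_{y_{0}}=id_{U}$ is the correct statement and suffices.
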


\begin{proof}
	Let $r : (Y,\kappa) \rightarrow (U,\kappa)$ be a digital retraction. Then we have $r \circ i = id_{(U,\kappa)}$. This means that $r$ is a digitally continuous surjection. Let $genus_{\kappa_{\ast},\lambda_{\ast}}(\pi_{r}) = 1$, where $Y^{[0,m]_{\mathbb{Z}}}$ and $Y \times U$ have $\kappa_{\ast}$ and $\lambda_{\ast}$ adjacencies, respectively. Then $\pi_{r}$ admits a digitally continuous map $s_{1} : V_{1} \rightarrow Y^{[0,m]_{\mathbb{Z}}}$ such that $\pi_{r} \circ s_{1} = id_{Y \times U}$. For a fixed $y_{0} \in Y$, set $\widehat{V_{1}} = \{u \in U : (y_{0},u) \in V_{1}\}$. We define a digitally continuous map
	\begin{eqnarray*}
		&&\widehat{s_{1}} : \widehat{V_{1}} \longrightarrow Y \\
		&&\hspace*{0.7cm} u \longmapsto \widehat{s_{1}}(u) = s_{1}(y_{0},u)(m) = u.
	\end{eqnarray*}
    Hence, we get
    \begin{eqnarray*}
    	r \circ \widehat{s_{1}}(u) = r(u) = u.
    \end{eqnarray*}
    This proves that $genus_{\kappa,\kappa}(r) = 1$.
\end{proof}

\begin{theorem}	
	Let $r : (Y,\kappa) \rightarrow (U,\kappa)$ be a digital retraction. Then \[\text{TC}^{\kappa,\kappa}(r) \geq genus_{\kappa,\kappa}(r).\] 
\end{theorem}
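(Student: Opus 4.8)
The plan is to promote the argument of the preceding theorem from the case of genus one to an arbitrary genus, so that the single local section there becomes one section per member of a covering. Write $n = \text{TC}^{\kappa,\kappa}(r) = genus_{\kappa_{\ast},\kappa_{\ast}}(\pi_{r})$, where $\pi_{r} : (Y^{[0,m]_{\mathbb{Z}}},\kappa_{\ast}) \rightarrow (Y \times U,\lambda_{\ast})$, $\pi_{r}(\alpha) = (\alpha(0),r(\alpha(m)))$. By the definition of the digital Schwarz genus, there is a covering $V_{1},V_{2},\dots,V_{n}$ of $Y \times U$ together with $(\lambda_{\ast},\kappa_{\ast})-$continuous maps $s_{j} : V_{j} \rightarrow Y^{[0,m]_{\mathbb{Z}}}$ satisfying $\pi_{r} \circ s_{j} = id_{V_{j}}$ for each $1 \leq j \leq n$. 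The goal is to convert this into a covering of $U$ of the same cardinality, each member of which carries a section of $r$, which yields $genus_{\kappa,\kappa}(r) \leq n$.

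First I would fix a basepoint $y_{0} \in Y$ and slice each $V_{j}$ over it by setting $\widehat{V_{j}} = \{u \in U : (y_{0},u) \in V_{j}\}$, exactly as in the previous theorem. These sets cover $U$: for every $u \in U$ the point $(y_{0},u)$ lies in $Y \times U = \bigcup_{j} V_{j}$, hence in some $V_{j}$, so $u \in \widehat{V_{j}}$. On each $\widehat{V_{j}}$ I would then define $\widehat{s_{j}} : \widehat{V_{j}} \rightarrow Y$ by $\widehat{s_{j}}(u) = s_{j}(y_{0},u)(m)$. Reading off the second coordinate of the identity $\pi_{r}(s_{j}(y_{0},u)) = (y_{0},u)$ gives $r(s_{j}(y_{0},u)(m)) = u$, that is $r \circ \widehat{s_{j}} = id_{\widehat{V_{j}}}$. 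Thus $\widehat{V_{1}},\dots,\widehat{V_{n}}$ is a covering of $U$ admitting local sections of $r$, and the inequality follows.

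The step that needs genuine care is the digital continuity of each $\widehat{s_{j}}$, and here the adjacency bookkeeping is the main obstacle. I would factor $\widehat{s_{j}}$ as the slice inclusion $u \mapsto (y_{0},u)$ from $(\widehat{V_{j}},\kappa)$ into $(Y \times U,\lambda_{\ast})$, followed by $s_{j}$, followed by evaluation $\alpha \mapsto \alpha(m)$. The slice inclusion is $(\kappa,\lambda_{\ast})-$continuous because its first coordinate is constant, so $u \leftrightarrow_{\kappa} u'$ forces the pair $(y_{0},u),(y_{0},u')$ to be adjacent in $Y \times U$ by the product-adjacency rule; $s_{j}$ is continuous by hypothesis and the evaluation map $E_{m,Y}$ is $(\kappa_{\ast},\kappa)-$continuous. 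Since a composite of digitally continuous maps is digitally continuous, $\widehat{s_{j}}$ is $(\kappa,\kappa)-$continuous, and each $\widehat{s_{j}}(u) = s_{j}(y_{0},u)(m)$ is a genuine section value of $r$ in the sense used for the Schwarz genus of $r$ in the previous theorem. Combining the covering bound with these sections gives $genus_{\kappa,\kappa}(r) \leq n = \text{TC}^{\kappa,\kappa}(r)$, as claimed.
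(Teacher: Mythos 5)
Your proof is correct and follows essentially the same route as the paper's: fix a basepoint $y_{0}$, slice the covering of $Y \times U$ into subsets $\widehat{V_{j}} \subseteq U$, and obtain sections of $r$ as the composites $E_{m,Y} \circ s_{j} \circ (u \mapsto (y_{0},u))$, exactly the maps $E_{m,Y} \circ s_{i} \circ h_{i}$ in the paper. If anything, your reading of $r \circ \widehat{s_{j}} = id_{\widehat{V_{j}}}$ off the second coordinate of $\pi_{r} \circ s_{j} = id_{V_{j}}$ is stated more carefully than the paper's corresponding computation.
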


\begin{proof}
	Let $r : Y \rightarrow U$ be a digital retraction and TC$^{\kappa,\kappa}(r) = k$. Then there exists a digital covering $V_{1},...,V_{k}$ of $Y \times U$ such that for each $i \in \{1,...,k\}$, the digitally continuous map $s_{i} : V_{i} \rightarrow Y^{[0,m]_{\mathbb{Z}}}$ satisfies that $\pi_{r} \circ s_{i} = id_{V_{i}}$. For a fixed $y_{0} \in Y$, set $\widehat{V_{i}} = \{u \in U : (y_{0},u) \in V_{i}\}$ for each $i$ and define the inclusion map $h_{i} : \widehat{V_{i}} \rightarrow V_{i}$ with $h_{i}(u) = (y_{0},u)$. Then the composition map $E_{m,Y} \circ s_{i} \circ h_{i} : \widehat{V_{i}} \rightarrow Y$ is a digitally continuous section of the digital image $r$ over each $\widehat{V_{i}}$ such that
	\begin{eqnarray*}
		r \circ (E_{m,Y} \circ s_{i} \circ h_{i})(u) = r \circ (E_{m,Y} \circ s_{i}(y_{0},u)) = r(u) = u = id_{\widehat{V_{i}}}.
	\end{eqnarray*}
    As a result, $genus_{\kappa,\kappa}(r) \leq k$.
\end{proof}

\begin{proposition} \label{p1}
	Let $\kappa$ and $\kappa^{'}$ be adjacency relations on a digital image $Y$ and let $\lambda$ be an adjacency relation on a digital image $Z$. If $\kappa \geq \kappa^{'}$ and $g : (Y,\kappa) \rightarrow (Z,\lambda)$ is $(\kappa,\lambda)-$continuous, then
	\begin{eqnarray*}
		\text{TC}(g)^{\kappa,\lambda} \geq \text{TC}(g)^{\kappa^{'},\lambda}.
	\end{eqnarray*}
\end{proposition}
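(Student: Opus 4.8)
The plan is to unwind the definition $\mathrm{TC}(g)^{\kappa,\lambda} = genus_{\kappa_{\ast},\lambda_{\ast}}(\pi_{g}^{\kappa,\lambda})$ and to transfer a genus-realizing cover from the $\kappa$-situation to the $\kappa'$-situation. Here I read $\kappa \geq \kappa'$ as: $u \leftrightarrow_{\kappa} v$ implies $u \leftrightarrow_{\kappa'} v$ for all $u,v \in Y$, i.e. $\kappa$-adjacency is contained in $\kappa'$-adjacency (this is the direction forced by the motivating example $\mathrm{TC}(Y,4)=2 \geq 1 = \mathrm{TC}(Y,8)$). Before touching any sections I would record the three monotonicity consequences of this hypothesis that drive the argument: (i) every $\kappa$-path $[0,m]_{\mathbb{Z}} \to (Y,\kappa)$ is automatically a $\kappa'$-path, so the set $Y^{[0,m]_{\mathbb{Z}}}$ formed with $\kappa$ sits inside the one formed with $\kappa'$; (ii) the induced path-space adjacency is monotone, i.e. $\alpha \leftrightarrow_{\kappa_{\ast}} \beta$ implies $\alpha \leftrightarrow_{\kappa'_{\ast}} \beta$; and (iii) the product adjacency on $Y \times Z$ is monotone in its first coordinate, so $\lambda_{\ast}$-adjacency built from $\kappa$ implies $\lambda_{\ast}$-adjacency built from $\kappa'$. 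Each is a direct unpacking of the relevant adjacency definition together with the inclusion $\kappa \subseteq \kappa'$.

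Next I would set $k = \mathrm{TC}(g)^{\kappa,\lambda}$ and choose a cover $W_{1},\dots,W_{k}$ of $Y \times Z$ together with $(\lambda_{\ast},\kappa_{\ast})$-continuous sections $s_{i} : W_{i} \to (Y^{[0,m]_{\mathbb{Z}}},\kappa_{\ast})$ satisfying $\pi_{g}^{\kappa,\lambda} \circ s_{i} = id_{W_{i}}$. I would keep the same cover and reinterpret each $s_{i}$ as a map into $(Y^{[0,m]_{\mathbb{Z}}},\kappa'_{\ast})$, which is legitimate by (i). The section identity is purely set-theoretic: since $\pi_{g}^{\kappa',\lambda}$ and $\pi_{g}^{\kappa,\lambda}$ are given by the same formula $\alpha \mapsto (\alpha(0),g(\alpha(m)))$, one gets $\pi_{g}^{\kappa',\lambda} \circ s_{i} = \pi_{g}^{\kappa,\lambda} \circ s_{i} = id_{W_{i}}$ at once. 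Hence the $W_{i}$ would form a genus-realizing cover for $\pi_{g}^{\kappa',\lambda}$ of size $k$, giving $genus_{\kappa'_{\ast},\lambda_{\ast}}(\pi_{g}^{\kappa',\lambda}) \leq k$, that is $\mathrm{TC}(g)^{\kappa',\lambda} \leq \mathrm{TC}(g)^{\kappa,\lambda}$.

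The main obstacle is the digital continuity of the reinterpreted sections, and this is where the hypothesis genuinely does its work. On the codomain side nothing goes wrong: by (ii) the target adjacency only becomes coarser, so $(\lambda_{\ast},\kappa_{\ast})$-continuity upgrades to $(\lambda_{\ast},\kappa'_{\ast})$-continuity for every pair that was already $\lambda_{\ast}$-adjacent under $\kappa$. The delicate point is the domain side: by (iii) the product adjacency on $W_{i}$ inherited from $\kappa'$ is finer than the one inherited from $\kappa$, so there may be pairs $w \leftrightarrow_{\lambda_{\ast}} w'$ that become adjacent only after refining the first coordinate from $\kappa$ to $\kappa'$, and for these the continuity of $s_{i}$ is not handed to us for free. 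I would dispatch these extra pairs directly: such a pair differs only through a first-coordinate step $y_{1} \leftrightarrow_{\kappa'} y_{2}$, and I would check that the paths $s_{i}(w)$ and $s_{i}(w')$ are $\kappa'_{\ast}$-adjacent by comparing their values stepwise and invoking (ii) together with the fact that both are $\kappa'$-paths. I would also flag at the outset that $\pi_{g}^{\kappa',\lambda}$ is defined only when $g$ is $(\kappa',\lambda)$-continuous, so this should be taken as part of the standing hypotheses (or arranged by passing to the fibrational substitute); once that is in place, the transfer above yields the claimed inequality.
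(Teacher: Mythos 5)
Your strategy---keep the cover realizing $\mathrm{TC}(g)^{\kappa,\lambda}$ and reuse the very same sections $s_{i}$ for the $\kappa'$-problem---is exactly the strategy of the paper's proof, your reading of $\kappa \geq \kappa'$ agrees with the paper's, and your preliminary observations (i)--(iii) are all correct. The problem is the step you postpone to the last paragraph: the $(\lambda^{2}_{\ast},\kappa'_{\ast})$-continuity of the reused sections on those pairs that are adjacent in the product adjacency built from $\kappa'$ but not in the one built from $\kappa$. This is a genuine gap, and the ``stepwise comparison'' you sketch cannot close it. For such a pair $w=(y,z)$, $w'=(y',z')$ with $y \leftrightarrow_{\kappa'} y'$ but $y$ not $\kappa$-adjacent to $y'$, the hypothesis that $s_{i}$ is $(\lambda^{1}_{\ast},\kappa_{\ast})$-continuous gives no information at all relating $s_{i}(w)$ to $s_{i}(w')$, because $w$ and $w'$ are not $\lambda^{1}_{\ast}$-adjacent; and the fact that $s_{i}(w)$, $s_{i}(w')$ are $\kappa'$-paths with $\kappa'$-adjacent initial points does not make them $\kappa'_{\ast}$-adjacent, since $\kappa'_{\ast}$-adjacency demands $s_{i}(w)(t) \leftrightarroweq_{\kappa'} s_{i}(w')(t')$ for \emph{every} pair of times $t \leftrightarroweq_{2} t'$, while the interior values of the two paths are completely uncontrolled. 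Concretely, on the $8$-point digital circle with $\kappa=4$, $\kappa'=8$, take $y=(1,0)$ and $y'=(0,1)$ ($8$-adjacent, not $4$-adjacent): nothing in the continuity hypothesis prevents a legitimate $(\lambda^{1}_{\ast},\kappa_{\ast})$-continuous section from assigning to $w$ a path winding one way around the circle and to $w'$ a path winding the other way, and such paths violate $\kappa'_{\ast}$-adjacency after a few steps. So a correct argument would have to modify the sections on the newly adjacent pairs (genuinely exploiting the richer space of $\kappa'$-paths), not merely reinterpret them; your proposal does not do this.

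You should also know that the point where your argument stalls is precisely the point the paper's own proof fails to justify. The paper argues: $s_{i}$ is $(\lambda^{1}_{\ast},\kappa_{\ast})$-continuous, and $y \leftrightarrow_{\kappa} y'$ implies $y \leftrightarrow_{\kappa'} y'$, ``hence'' $s_{i}(y,z) \leftrightarrow_{\kappa_{\ast}} s_{i}(y',z')$ whenever $(y,z) \leftrightarrow_{\kappa',\lambda} (y',z')$. That inference runs backwards: the containment $\kappa \subseteq \kappa'$ yields $\lambda^{1}_{\ast} \subseteq \lambda^{2}_{\ast}$, so passing from $\kappa$ to $\kappa'$ \emph{enlarges} the set of pairs on which continuity must be checked, whereas the quoted deduction would be valid only if $\lambda^{2}_{\ast} \subseteq \lambda^{1}_{\ast}$, i.e.\ if the containment of adjacencies were reversed (and the reversed reading cannot be the intended one, since then the statement itself would fail, e.g.\ comparing $\mathrm{TC}(id)^{8,8}=1$ with $\mathrm{TC}(id)^{4,8} \geq 2$ on the digital circle). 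So you have not overlooked an idea that the paper supplies; you have isolated, more honestly than the paper does, the real difficulty---but you then waved it away rather than resolved it, and as it stands neither your argument nor the paper's establishes the proposition.
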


\begin{proof}
	Let TC$(g)^{\kappa,\lambda} = r$. For any $m \in \mathbb{Z}$, let $\kappa_{\ast}$ be an adjacency relations on $Y^{[0,m]_{\mathbb{Z}}}$. Let $\lambda^{1}_{\ast}$ be a cartesian product adjacency on $Y \times Z$ such that $Y$ has the adjacency $\kappa$ and $\lambda^{2}_{\ast}$ be a cartesian product adjacency on $Y \times Z$ such that $Y$ has the adjacency $\kappa^{'}$. Then Schwarz genus of the map $\pi_{g}^{\kappa,\lambda}$ is $r$. Therefore, we have the partition $Y \times Y = A_{1} \cup A_{2} \cup ... \cup A_{r}$ such that for $i = 1, ..., r$, there exists a digitally $(\lambda^{1}_{\ast},\kappa_{\ast})$-continuous map $s_{i} : A_{i} \rightarrow Y^{[0,m]_{\mathbb{Z}}}$ with $\pi_{g}^{\kappa,\lambda} \circ s_{i} = id_{A_{i}}$. Since $s_{i}$ is $(\lambda^{1}_{\ast},\kappa_{\ast})$-continuous, for any $(y,z)$, $(y^{'},z^{'}) \in A_{i}$ with $(y,z) \leftrightarrow_{\kappa,\lambda} (y^{'},z^{'})$, we have that $s_{i}(y,z) \leftrightarrow_{\kappa_{\ast}} s_{i}(y^{'},z^{'})$. If $\kappa \geq \kappa^{'}$, then $y \leftrightarrow_{\kappa} y^{'}$ implies $y \leftrightarrow_{\kappa^{'}} y^{'}$. By using this fact, we get $s_{i}(y,z) \leftrightarrow_{\kappa_{\ast}} s_{i}(y^{'},z^{'})$ whenever $(y,z) \leftrightarrow_{\kappa^{'},\lambda} (y^{'},z^{'})$. This shows that the digital map $s_{i}$ for all $i$ is digitally $(\lambda^{2}_{\ast},\kappa_{\ast})$-continuous. Hence, we have the partition $Y \times Z = A_{1} \cup A_{2} \cup ... \cup A_{r}$ with the digitally $(\lambda^{2}_{\ast},\kappa_{\ast})-$continuous section $s_{i}$ of $\pi_{g}^{\kappa^{'},\lambda}$ for all $i = 1,...,r$. Finally, we find TC$(g)^{\kappa^{'},\lambda} \leq r$.
\end{proof}

\begin{proposition}
	Let $\lambda$ and $\lambda^{'}$ be adjacency relations on a digital image $Z$ and let $\kappa$ be an adjacency relation on a digital image $Y$. If $\lambda^{'} \geq \lambda$ and $g : (Y,\kappa) \rightarrow (Z,\lambda^{'})$ is $(\kappa,\lambda)-$continuous, then
	\begin{eqnarray*}
		\text{TC}(g)^{\kappa,\lambda} \leq \text{TC}(g)^{\kappa,\lambda^{'}}.
	\end{eqnarray*}
\end{proposition}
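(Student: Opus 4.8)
The plan is to run the argument of Proposition \ref{p1} in the dual direction, now varying the adjacency on the range $Z$ rather than on the domain $Y$, and reversing the direction of the inclusion between the two covers accordingly. First I would set $r = \text{TC}(g)^{\kappa,\lambda^{'}}$ and fix, for a suitable $m \in \mathbb{Z}$, the adjacency $\kappa_{\ast}$ on $Y^{[0,m]_{\mathbb{Z}}}$ together with the two cartesian product adjacencies on $Y \times Z$: write $\lambda_{\ast}$ for the one built from $\kappa$ on $Y$ and $\lambda$ on $Z$, and $\lambda^{'}_{\ast}$ for the one built from $\kappa$ on $Y$ and $\lambda^{'}$ on $Z$. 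Since $\lambda^{'} \geq \lambda$, every $\lambda$-adjacency is also a $\lambda^{'}$-adjacency, so the $(\kappa,\lambda)$-continuity of $g$ upgrades to $(\kappa,\lambda^{'})$-continuity; in particular both $\pi_{g}^{\kappa,\lambda}$ and $\pi_{g}^{\kappa,\lambda^{'}}$ are well-defined digital maps, and as set maps they are identical, each sending $\alpha \mapsto (\alpha(0), g(\alpha(m)))$ and differing only in the adjacency carried by their common codomain $Y \times Z$.

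Next I would unwind the definition of the digital Schwarz genus to obtain a partition $Y \times Z = A_{1} \cup A_{2} \cup \cdots \cup A_{r}$ together with $(\lambda^{'}_{\ast},\kappa_{\ast})$-continuous sections $s_{i} : A_{i} \rightarrow Y^{[0,m]_{\mathbb{Z}}}$ satisfying $\pi_{g}^{\kappa,\lambda^{'}} \circ s_{i} = id_{A_{i}}$ for each $i = 1, \ldots, r$. The whole strategy is to recycle this exact data to estimate $genus_{\kappa_{\ast},\lambda_{\ast}}(\pi_{g}^{\kappa,\lambda})$ from above by $r$.

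The decisive step, and the one demanding the most care, is to verify that each $s_{i}$ is still $(\lambda_{\ast},\kappa_{\ast})$-continuous. Here $\lambda^{'} \geq \lambda$ enters: because $z \leftrightarrow_{\lambda} z^{'}$ forces $z \leftrightarrow_{\lambda^{'}} z^{'}$, a case-check against the four alternatives in the definition of the product adjacency shows that $(y,z) \leftrightarrow_{\lambda_{\ast}} (y^{'},z^{'})$ implies $(y,z) \leftrightarrow_{\lambda^{'}_{\ast}} (y^{'},z^{'})$ --- the $Y$-coordinate is left untouched and only the $\lambda$-relation on the $Z$-coordinate is promoted to $\lambda^{'}$. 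Hence every $\lambda_{\ast}$-adjacent pair of $A_{i}$ is a fortiori $\lambda^{'}_{\ast}$-adjacent, and the established $(\lambda^{'}_{\ast},\kappa_{\ast})$-continuity of $s_{i}$ immediately yields $s_{i}(y,z) \leftrightarrow_{\kappa_{\ast}} s_{i}(y^{'},z^{'})$. Thus $s_{i}$ is $(\lambda_{\ast},\kappa_{\ast})$-continuous. Note that this is the reverse of the inclusion exploited in Proposition \ref{p1}, which is exactly why the inequality now points the other way.

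Finally, since $\pi_{g}^{\kappa,\lambda}$ and $\pi_{g}^{\kappa,\lambda^{'}}$ coincide as functions, the section identities survive verbatim, giving $\pi_{g}^{\kappa,\lambda} \circ s_{i} = id_{A_{i}}$ for all $i$. The same partition $Y \times Z = A_{1} \cup \cdots \cup A_{r}$ then supports $(\lambda_{\ast},\kappa_{\ast})$-continuous local sections of $\pi_{g}^{\kappa,\lambda}$, so $genus_{\kappa_{\ast},\lambda_{\ast}}(\pi_{g}^{\kappa,\lambda}) \leq r$, i.e. $\text{TC}(g)^{\kappa,\lambda} \leq \text{TC}(g)^{\kappa,\lambda^{'}}$. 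I anticipate the only real obstacle to be bookkeeping rather than substance: keeping straight the direction of the adjacency inclusion induced by $\lambda^{'} \geq \lambda$ at the level of the product adjacency, and checking that the fibration (or fibrational-substitute) clause in the definition of $\text{TC}(g)$ does not interfere with this purely section-theoretic transfer.
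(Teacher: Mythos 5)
Your proposal is correct and follows essentially the same route as the paper: the paper's own proof is a one-liner declaring that the method of Proposition \ref{p1} carries over with the roles of domain and codomain adjacencies exchanged, and the section-transfer argument you spell out is exactly that method. One substantive remark: the implication you isolate as the decisive step --- $z \leftrightarrow_{\lambda} z'$ implies $z \leftrightarrow_{\lambda'} z'$ when $\lambda' \geq \lambda$, so that every $\lambda_{\ast}$-adjacent pair is $\lambda'_{\ast}$-adjacent and the sections of $\pi_{g}^{\kappa,\lambda'}$ remain continuous as sections of $\pi_{g}^{\kappa,\lambda}$ --- is the direction the argument actually requires, whereas the paper's proof quotes the reverse implication ($z \leftrightarrow_{\lambda'} z'$ implies $z \leftrightarrow_{\lambda} z'$), which under the standard ordering of adjacencies (e.g. $8 \geq 4$ in $\mathbb{Z}^{2}$, where $4$-adjacent points are in particular $8$-adjacent) is both false in general and pointed the wrong way for transferring continuity; the analogous directional slip appears in the paper's proof of Proposition \ref{p1}. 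So your write-up does not merely reproduce the paper's method in detail; it also states the key adjacency comparison correctly, and your closing caveat about the fibrational-substitute clause is reasonable --- it causes no trouble here because $\pi_{g}^{\kappa,\lambda}$ and $\pi_{g}^{\kappa,\lambda'}$ coincide as set maps and are fibrations (or admit a common substitute) simultaneously.
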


\begin{proof}
	The method is similar to the proof of the previous result. Hence, it is enough to say that for $z$, $z^{'} \in Z$, $z \leftrightarrow_{\lambda^{'}} z^{'}$ implies $z \leftrightarrow_{\lambda} z^{'}$ when $\lambda^{'} \geq \lambda$.
\end{proof}

\quad By Example 5.1 of \cite{MelihKaraca:2020}, we know that
Farber's cohomological method is not true for digital images. Theorem 3.19 of \cite{Pavesic:2019} improves this method for TC$(g)$, where $g : Y \rightarrow Z$ is a map and shows that TC$(g) \geq nil(Ker(1,g)^{\ast})$, where $(1,g)^{\ast}$ denotes an induced map between two cohomologies $H^{\ast}(Y \times Z)$ and $H^{\ast}(Y)$. We shall give the digital interpretation of this result. Let $F$ be a field and consider the identitiy $id$ on the digital image $([0,m]_{\mathbb{Z}} \times [0,m]_{\mathbb{Z}},4)$ for any $m \in \mathbb{Z}$. Then the digital contractibility of $[0,m]_{\mathbb{Z}} \times [0,m]_{\mathbb{Z}}$ gives that TC$(id)^{4,4} = 1$. By Example 5.1 of \cite{MelihKaraca:2020}, we have that TC$(id)^{4,4}$ can be less than or equal to the number of the longest nontrivial product in $F$ with considering that $H^{1,4}([0,m]_{\mathbb{Z}} \times [0,m]_{\mathbb{Z}},F) = F$. This proves the following:

\begin{proposition}
	Cohomological cup-product method for TC$(g)$ does not work in digital maps.
\end{proposition}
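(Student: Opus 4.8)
The plan is to argue by exhibiting a single explicit counterexample, since the paragraph preceding the statement has already marshalled every ingredient required; no general structural theorem is needed. The idea is to display a digital map whose topological complexity can be computed directly and then to observe that the cohomological cup-product machinery, were it valid in the digital category, would predict a strictly larger value.

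First I would take the digital map to be the identity $id$ on the digital image $([0,m]_{\mathbb{Z}} \times [0,m]_{\mathbb{Z}},4)$ for an arbitrary $m \in \mathbb{Z}$. Appealing to the observation recorded just after the definition of TC$(g)$, namely that TC$(g)^{\kappa,\kappa} = \text{TC}(Y,\kappa)$ whenever $g = id_{Y}$, together with the Proposition that TC$(Y,\kappa) = 1$ precisely when $Y$ is digitally connected and $\kappa$-contractible, I would conclude
\[
\text{TC}(id)^{4,4} = \text{TC}([0,m]_{\mathbb{Z}} \times [0,m]_{\mathbb{Z}},4) = 1,
\]
using only that the product of two digital intervals is digitally $4$-contractible.

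Next I would invoke Example 5.1 of \cite{MelihKaraca:2020}, which supplies the digital cohomology computation $H^{1,4}([0,m]_{\mathbb{Z}} \times [0,m]_{\mathbb{Z}},F) = F$ for a field $F$. The core of the argument is to confront this nonvanishing first cohomology with the digital analogue of Pavesic's inequality $\text{TC}(g) \geq nil(Ker(1,g)^{\ast})$ from \cite{Pavesic:2019}. In the topological world a nonzero class in $H^{1}$ yields a nonzero zero-divisor, and the length of the longest nontrivial product of such zero-divisors would furnish a lower bound for TC strictly greater than $1$. Since here the digital image carries a nonzero $H^{1,4}$ while its topological complexity is exactly $1$, the value predicted by the cup-product method and the value computed directly are incompatible.

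The conceptual obstacle, and the real content of the statement, is that digital cohomology is not a homotopy invariant in the sense enjoyed by singular cohomology: a digitally contractible image need not have vanishing cohomology, so there is no mechanism forcing the relevant products of zero-divisors to die on a contractible space. Consequently the counterexample does not merely exhibit a non-sharp bound; it shows the inequality itself breaks down. I would close by noting that this is exactly the pathology flagged at the opening of the paragraph, so that the failure of Farber's cohomological method on digital images (Example 5.1 of \cite{MelihKaraca:2020}) is inherited by the map-level invariant TC$(g)$, which is what the proposition asserts.
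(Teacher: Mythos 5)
Your proposal is correct and follows essentially the same route as the paper: both take the identity map on $([0,m]_{\mathbb{Z}} \times [0,m]_{\mathbb{Z}},4)$, use digital contractibility to get $\text{TC}(id)^{4,4}=1$, and contrast this with the nonvanishing $H^{1,4}([0,m]_{\mathbb{Z}} \times [0,m]_{\mathbb{Z}},F)=F$ from Example 5.1 of \cite{MelihKaraca:2020}, which would force a cup-product lower bound exceeding $1$ were the digital analogue of Pavesic's inequality valid. Your added remark that the failure stems from digital cohomology not being a homotopy invariant is a fair articulation of what the paper leaves implicit.
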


\begin{lemma}\label{l1}
	If $Y$ is digitally contractible, then $Y^{[0,m]_{\mathbb{Z}}}$ is digitally contractible for any positive integer $m$.
\end{lemma}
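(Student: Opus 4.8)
The plan is to transport a contracting homotopy of $Y$ to a contracting homotopy of the path space $Y^{[0,m]_{\mathbb{Z}}}$ by applying it pointwise. Since $Y$ is digitally $\kappa$-contractible, there is a digital homotopy $H : Y \times [0,M]_{\mathbb{Z}} \to Y$ for some step number $M$ with $H(y,0)=y$ and $H(y,M)=y_{0}$ for every $y \in Y$, where $y_{0} \in Y$ is fixed. I would then define
\[
\widetilde{H} : Y^{[0,m]_{\mathbb{Z}}} \times [0,M]_{\mathbb{Z}} \longrightarrow Y^{[0,m]_{\mathbb{Z}}}, \qquad \widetilde{H}(\alpha,t)(s) = H(\alpha(s),t),
\]
and argue that $\widetilde{H}$ is a digital homotopy from the identity of $Y^{[0,m]_{\mathbb{Z}}}$ to the constant map at the constant path $\varepsilon_{y_{0}}$.

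First I would check that $\widetilde{H}$ actually takes values in $Y^{[0,m]_{\mathbb{Z}}}$: for fixed $t$ the assignment $s \mapsto H(\alpha(s),t)$ is the composite of the $\kappa$-path $\alpha$ with the slice $H_{t}=H(\cdot,t)$, both digitally continuous, so $\widetilde{H}(\alpha,t)$ is again a $\kappa$-path. The boundary behaviour is immediate: at $t=0$ we get $\widetilde{H}(\alpha,0)(s)=H(\alpha(s),0)=\alpha(s)$, so $\widetilde{H}(\alpha,0)=\alpha$; at $t=M$ we get $\widetilde{H}(\alpha,M)(s)=H(\alpha(s),M)=y_{0}$, so $\widetilde{H}(\alpha,M)=\varepsilon_{y_{0}}$ independently of $\alpha$. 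Hence the two ends of $\widetilde{H}$ are exactly $\mathrm{id}_{Y^{[0,m]_{\mathbb{Z}}}}$ and a constant map, which is precisely what digital contractibility of $Y^{[0,m]_{\mathbb{Z}}}$ asks for.

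The remaining work is to verify the two slice conditions in the definition of a digital homotopy, and this is where the genuine content lies. For fixed $t$, continuity of $\widetilde{H}_{t}$ on the path space reduces, via the function-space adjacency, to the implication: if $\alpha(s) \leftrightarroweq_{\kappa} \beta(s')$ whenever $s \leftrightarroweq_{2} s'$, then $H_{t}(\alpha(s)) \leftrightarroweq_{\kappa} H_{t}(\beta(s'))$, which follows at once from $(\kappa,\kappa)$-continuity of the slice $H_{t}$. The harder slice condition is continuity in the homotopy parameter: for fixed $\alpha$ and $t \leftrightarroweq_{2} t'$ I must show $\widetilde{H}(\alpha,t) \leftrightarroweq \widetilde{H}(\alpha,t')$ in $Y^{[0,m]_{\mathbb{Z}}}$, that is, $H(\alpha(s),t) \leftrightarroweq_{\kappa} H(\alpha(s'),t')$ for all $s \leftrightarroweq_{2} s'$.

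I expect this last point to be the main obstacle, because here the path variable $s$ and the step variable $t$ move simultaneously, so it is not enough to apply the two separate slices of $H$ one at a time. The clean way through is to observe that $\alpha(s) \leftrightarroweq_{\kappa} \alpha(s')$ (as $\alpha$ is a path and $s \leftrightarroweq_{2} s'$) together with $t \leftrightarroweq_{2} t'$ makes $(\alpha(s),t)$ and $(\alpha(s'),t')$ adjacent in $Y \times [0,M]_{\mathbb{Z}}$ for the product adjacency, so the relation we want is exactly the statement that $H$ is continuous as a digital map on the product $Y \times [0,M]_{\mathbb{Z}}$. I would therefore run the argument with this joint continuity of the contracting homotopy as the operative hypothesis supplied by the homotopy $H$, emphasising that the slicewise continuity of $H$ alone does not control the simultaneous change of both coordinates; once joint continuity is in hand, the diagonal case closes and $\widetilde{H}$ is a bona fide digital contraction of $Y^{[0,m]_{\mathbb{Z}}}$.
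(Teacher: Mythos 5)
Your construction differs from the paper's: you push the whole path through the contraction, $\widetilde{H}(\alpha,t)(s)=H(\alpha(s),t)$, whereas the paper's proof assigns to $(\alpha,t)$ a digital path joining $\alpha(t)$ to $F_{t}(\alpha(t))$. Your version has the right endpoint behaviour, but the difficulty you correctly isolate in your last paragraph is a genuine gap, and the way you close it is not available under the paper's definitions. Digital contractibility (Boxer's definition, the one stated in Section 2) supplies a homotopy $H$ that is only \emph{slicewise} continuous: each slice $H_{t}$ is $(\kappa,\kappa)$-continuous and each track $H(y,\cdot)$ is $(2,\kappa)$-continuous; it does \emph{not} supply continuity of $H$ with respect to the product adjacency on $Y\times[0,M]_{\mathbb{Z}}$. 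So joint continuity is not ``supplied by the homotopy $H$'' --- it is a strictly stronger hypothesis, and your step for continuity in the homotopy parameter really needs it. Concretely, take $Y=[0,2]_{\mathbb{Z}}$ with the legitimate contraction $H(y,t)=\max(y-t,0)$ and let $\alpha$ be the path $012$. Then $\widetilde{H}(\alpha,0)=012$ and $\widetilde{H}(\alpha,1)=001$ are not adjacent in the function-space adjacency (take $s=2$, $s'=1$: the values $2$ and $0$ are neither equal nor $2$-adjacent), so the track $t\mapsto\widetilde{H}(\alpha,t)$ is not digitally continuous and $\widetilde{H}$ is not a digital homotopy.

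Worse, the gap cannot be repaired by choosing a better contraction, because digitally contractible does not imply contractible through a jointly continuous homotopy. Let $Y=\{(0,0),(1,0),(1,1),(0,1)\}\subset\mathbb{Z}^{2}$ with $4$-adjacency, the four-point digital simple closed curve. It is digitally $4$-contractible (fold $(1,1)$ to $(1,0)$ and $(0,1)$ to $(0,0)$, then fold $(1,0)$ to $(0,0)$; every slice and every track is continuous), so the lemma's hypothesis holds for it. Write $N[y]$ for the set of points equal or $4$-adjacent to $y$; a direct check gives $\bigcap_{y\in N[y']}N[y]=\{y'\}$ for every $y'\in Y$. Now if $G:Y\times[0,m]_{\mathbb{Z}}\to Y$ were continuous for the product adjacency with $G_{0}=id_{Y}$, then for each $y'$ and each $y\in N[y']$ the points $(y,0)$ and $(y',1)$ are adjacent in the product, forcing $G_{1}(y')\in N[y]$ and hence $G_{1}(y')\in\bigcap_{y\in N[y']}N[y]=\{y'\}$; thus $G_{1}=id_{Y}$ and, inductively, $G_{t}=id_{Y}$ for all $t$, so $G_{m}$ is never constant. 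No jointly continuous contraction of this $Y$ exists at all. With your ``operative hypothesis'' you therefore prove a strictly weaker statement, one that is vacuous exactly for images like this. Any complete proof must get by with slicewise continuity alone, which is why the paper's own argument avoids composing $H_{t}$ with $\alpha$ and instead works with a path from $\alpha(t)$ to $F_{t}(\alpha(t))$, using only evaluations and single slices of the contraction.
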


\begin{proof}
	Let $Y$ be a digitally contractible image. Then for $m \in \mathbb{Z}$, there exists a homotopy in the digital sense
	\begin{eqnarray*}
		F : Y \times [0,m]_{\mathbb{Z}} \rightarrow Y
	\end{eqnarray*}
    such that $F(y,0) = id_{Y}$ and $F(y,m) = c_{y}$, where $c_{y} : Y \longrightarrow Y$ is defined as $c_{y} = y_{0}$ for a fixed $y_{0} \in Y$. For any $\alpha \in Y^{[0,m]_{\mathbb{Z}}}$, define a digital map $G$ as follows:
    \begin{eqnarray*}
    	G : &&Y^{[0,m]_{\mathbb{Z}}} \times [0,m]_{\mathbb{Z}} \stackrel{E_{I,Y}^{2,\kappa}}{\longrightarrow} Y \stackrel{F_{t}}{\longrightarrow} Y \stackrel{\beta}{\longrightarrow} Y^{[0,m]_{\mathbb{Z}}}\\
    	&&\hspace*{1.2cm}(\alpha,t) \longmapsto \alpha(t) \longmapsto F_{t}(\alpha(t)) \longmapsto \beta,
    \end{eqnarray*}
    where $\beta$ is a digitally continuous digital path in $Y$ that takes $\alpha(t)$ to $F_{t}(\alpha(t))$. Since $E_{I,Y}^{2,\kappa}$, $F_{t}$ and $\beta$ are digitally continuous maps, $G$ is digitally continuous. It is easy to see that $G$ satisfies the homotopy conditions. For $y$, $z \in Y$, let $\alpha$ be a digital path from $y$ to $z$. Then, $G(\alpha,0)$ and $G(\alpha,m)$ are digital paths from $y$ to $y$ and from $z$ to $y_{0}$, respectively. This proves that $Y^{[0,m]_{\mathbb{Z}}}$ is digitally contractible.
\end{proof}

\begin{theorem}
	For a digital fibration $g$, we have that 
    TC$(g)^{\kappa,\lambda} \leq \text{cat}_{\lambda_{\ast}}(Y \times Z)$,
    where $Y \times Z$ has $\lambda_{\ast}-$adjaceny. Moreover, if $Y$ is $\kappa-$contractible, then
    \begin{eqnarray*}
    	\text{TC}(g)^{\kappa,\lambda} = \text{cat}_{\lambda_{\ast}}(Y \times Z).
    \end{eqnarray*}
\end{theorem}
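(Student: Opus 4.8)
The plan is to read off both assertions directly from Proposition \ref{p2}, applied to the particular digital fibration that computes $\text{TC}(g)^{\kappa,\lambda}$. By definition, $\text{TC}(g)^{\kappa,\lambda} = genus_{\kappa_{\ast},\lambda_{\ast}}(\pi_{g}^{\kappa,\lambda})$, where $\pi_{g}^{\kappa,\lambda} : (Y^{[0,m]_{\mathbb{Z}}},\kappa_{\ast}) \rightarrow (Y \times Z,\lambda_{\ast})$. As recorded in the discussion following the definition of $\text{TC}(g)^{\kappa,\lambda}$, this map is a genuine digital fibration because it factors through the identity and the evaluation map, both of which are fibrations in the digital sense. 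Hence $\pi_{g}^{\kappa,\lambda}$ qualifies as the hypothesis ``$g$ a digital fibration'' of Proposition \ref{p2}, with domain $(Y^{[0,m]_{\mathbb{Z}}},\kappa_{\ast})$ playing the role of $(Y,\kappa)$ and codomain $(Y \times Z,\lambda_{\ast})$ playing the role of $(Z,\lambda)$.

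For the inequality, I would invoke the first clause of Proposition \ref{p2} verbatim: it gives $genus_{\kappa_{\ast},\lambda_{\ast}}(\pi_{g}^{\kappa,\lambda}) \leq \text{cat}_{\lambda_{\ast}}(Y \times Z)$, which is exactly $\text{TC}(g)^{\kappa,\lambda} \leq \text{cat}_{\lambda_{\ast}}(Y \times Z)$. No further work is needed once the identification of the fibration and its adjacencies is in place.

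For the equality under the hypothesis that $Y$ is $\kappa$-contractible, the second clause of Proposition \ref{p2} requires the domain of the fibration, namely $Y^{[0,m]_{\mathbb{Z}}}$, to be contractible. This is precisely the content of Lemma \ref{l1}: contractibility of $Y$ forces $Y^{[0,m]_{\mathbb{Z}}}$ to be $\kappa_{\ast}$-contractible for every positive integer $m$. Feeding this into the equality clause of Proposition \ref{p2} yields $genus_{\kappa_{\ast},\lambda_{\ast}}(\pi_{g}^{\kappa,\lambda}) = \text{cat}_{\lambda_{\ast}}(Y \times Z)$, that is, $\text{TC}(g)^{\kappa,\lambda} = \text{cat}_{\lambda_{\ast}}(Y \times Z)$.

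The main obstacle is not the logical chain, which is short, but the bookkeeping of adjacencies: one must confirm that the $\kappa_{\ast}$ appearing in Lemma \ref{l1} for the path space and the $\kappa_{\ast}$ appearing in the definition of $\pi_{g}^{\kappa,\lambda}$ denote the same adjacency relation on $Y^{[0,m]_{\mathbb{Z}}}$, and likewise that the product adjacency $\lambda_{\ast}$ used for $\text{cat}_{\lambda_{\ast}}(Y \times Z)$ matches the target adjacency of $\pi_{g}^{\kappa,\lambda}$. Once these identifications are verified, both halves follow mechanically from Proposition \ref{p2} together with Lemma \ref{l1}.
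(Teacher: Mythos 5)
Your proposal is correct and follows essentially the same route as the paper's own proof: both establish that $\pi_{g}^{\kappa,\lambda}$ is a digital fibration via the factorization through the evaluation map, apply the inequality clause of Proposition \ref{p2} for the first assertion, and combine Lemma \ref{l1} with the equality clause of Proposition \ref{p2} for the contractible case. Your extra attention to matching the adjacency relations $\kappa_{\ast}$ and $\lambda_{\ast}$ is a sensible check but does not alter the argument.
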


\begin{proof}
	Let $g$ be a digital fibration. Since the evaluation map is a digital fibration, we have that $\pi_{g} = (id_{Y} \times g) \circ \pi$ is a digital fibration. Proposition \ref{p2} gives us $genus_{\kappa_{\ast},\lambda_{\ast}}(\pi_{g}^{\kappa,\lambda}) \leq \text{cat}_{\lambda_{\ast}}(Y \times Z)$,
	where $Y^{[0,m]_{\mathbb{Z}}}$ has $\kappa_{\ast}-$adjacency. This means that TC$(g)^{\kappa,\lambda} \leq \text{cat}_{\lambda_{\ast}}(Y \times Z)$. Now assume that $Y$ is $\kappa-$contractible. By Lemma \ref{l1}, we get $Y^{[0,m]_{\mathbb{Z}}}$ is $\kappa_{\ast}-$contractible. Finally, Proposition \ref{p2} gives us that $genus_{\kappa,\lambda_{\ast}}(\pi_{g}^{\kappa,\lambda}) = \text{cat}_{\lambda_{\ast}}(Y \times Z)$.
\end{proof}

\quad Pavesic \cite{Pavesic:2019} states that TC of a map of topological spaces is a FHE-invariant. We shall show that TC of a digital map is a digital FHE-invariant.

\begin{lemma}\label{l2}
	Let $h : (X,\kappa_{1}) \rightarrow (Y,\kappa_{2})$ and $g : (Y,\kappa_{2}) \rightarrow (Z,\kappa_{3})$ be two digitally continuous and surjective maps.
	
	\textbf{i)} If a continuity of a map $k : (Y,\kappa_{2}) \rightarrow (X,\kappa_{1})$ exists and $h \circ k$ is digitally homotopic to $id_{Y}$, then TC$(g \circ h)^{\kappa_{1},\kappa_{3}} \geq \text{TC}(g)^{\kappa_{2},\kappa_{3}}$.
	
	\textbf{ii)} If a continuity of a map $k : (Y,\kappa_{2}) \rightarrow (X,\kappa_{1})$ exists and $k \circ h$ is digitally homotopic to $id_{X}$, then TC$(g \circ h)^{\kappa_{1},\kappa_{3}} \leq \text{TC}(g)^{\kappa_{2},\kappa_{3}}$.
\end{lemma}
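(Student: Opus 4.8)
The plan is to prove both inequalities by transporting Schwarz-genus data across the commutative square that links the two digital fibrations $\pi_{g\circ h}^{\kappa_1,\kappa_3}$ and $\pi_g^{\kappa_2,\kappa_3}$. Writing $h_\ast$ for the post-composition map $\alpha\mapsto h\circ\alpha$ on path spaces, a direct evaluation at any $\alpha$ gives
\[
\pi_g^{\kappa_2,\kappa_3}\circ h_\ast=(h\times id_Z)\circ \pi_{g\circ h}^{\kappa_1,\kappa_3},
\]
since both sides send $\alpha$ to $(h(\alpha(0)),g(h(\alpha(m))))$. Throughout I would use that $\pi_g^{\kappa_2,\kappa_3}$ and $\pi_{g\circ h}^{\kappa_1,\kappa_3}$ are digital fibrations (passing to digital fibrational substitutes of $g$ and $g\circ h$ if needed), so the genus is computed by continuous local sections, and I would fix once and for all a digital homotopy $H$ realizing $h\circ k\simeq id_Y$ (for i) and $G$ realizing $k\circ h\simeq id_X$ (for ii); for a fixed basepoint these give $(2,\kappa_2)$- resp.\ $(2,\kappa_1)$-continuous tracks $H(y,-)$ from $y$ to $h(k(y))$ and $G(x,-)$ from $x$ to $k(h(x))$, which I will concatenate with transferred section paths (adjusting step numbers as in the remark after the definition of TC$(Y,\kappa)$).

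For part \textbf{i)}, suppose TC$(g\circ h)^{\kappa_1,\kappa_3}=r$ with a cover $U_1,\dots,U_r$ of $X\times Z$ and sections $\sigma_i\colon U_i\to X^{[0,m]_{\mathbb Z}}$ of $\pi_{g\circ h}^{\kappa_1,\kappa_3}$. Pulling back along $k\times id_Z$ yields a cover $W_i=(k\times id_Z)^{-1}(U_i)$ of $Y\times Z$, and for $(y,z)\in W_i$ I would set $\tau_i(y,z)$ to be the concatenation of $H(y,-)$ with the path $h\circ\sigma_i(k(y),z)$. The first factor repairs the initial point from $h(k(y))$ back to $y$, and the terminal condition is automatic: $\sigma_i$ being a section of $\pi_{g\circ h}^{\kappa_1,\kappa_3}$ means its endpoint $x'$ satisfies $g(h(x'))=z$, and $h(x')$ is precisely the endpoint of $\tau_i(y,z)$, so $g(\tau_i(y,z)(m))=z$. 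Hence $\pi_g^{\kappa_2,\kappa_3}\circ\tau_i=id_{W_i}$; the $\tau_i$ are digitally continuous as concatenations of the continuous maps $H$, $\sigma_i$, $k\times id_Z$ and $h_\ast$, so $genus_{(\kappa_2)_\ast,(\kappa_3)_\ast}(\pi_g^{\kappa_2,\kappa_3})\le r$, giving TC$(g)^{\kappa_2,\kappa_3}\le\,$TC$(g\circ h)^{\kappa_1,\kappa_3}$.

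For part \textbf{ii)} I would run the mirror construction: start from an optimal cover $W_1,\dots,W_r$ of $Y\times Z$ with sections $s_i$ of $\pi_g^{\kappa_2,\kappa_3}$, pull back along $h\times id_Z$ to the cover $U_i=(h\times id_Z)^{-1}(W_i)$ of $X\times Z$, push the path $s_i(h(x),z)$ into $X$, and repair the initial point using $G$. The main obstacle is the \emph{terminal}-point condition, and the asymmetry with part i) is exactly the difficulty: a section of $\pi_{g\circ h}^{\kappa_1,\kappa_3}$ must end in $(g\circ h)^{-1}(z)=h^{-1}(g^{-1}(z))$, whereas the naive transfer $k\circ s_i(h(x),z)$ only ends at $k(y')$ with $g(y')=z$, and nothing in $k\circ h\simeq id_X$ forces $g(h(k(y')))=z$ on the nose. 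I expect this to be the crux of the argument. The way I would resolve it is to produce the $X$-path not by applying $k$ but by \emph{lifting} the base path $s_i(h(x),z)$ (which starts at $h(x)$) through the homotopy lifting property of $h$ — or of its fibrational substitute — to a digital path $\sigma_i(x,z)$ in $X$ starting at $x$; its endpoint $x'$ then satisfies $h(x')=y'$, so $g(h(x'))=g(y')=z$ exactly. With such a lift the initial point already equals $x$ (so $G$ is used only to guarantee such a lift exists compatibly with $k\circ h\simeq id_X$), one gets $\pi_{g\circ h}^{\kappa_1,\kappa_3}\circ\sigma_i=id_{U_i}$, and TC$(g\circ h)^{\kappa_1,\kappa_3}\le r=\,$TC$(g)^{\kappa_2,\kappa_3}$ follows.
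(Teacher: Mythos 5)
Your part \textbf{i)} is correct and is essentially the paper's own proof: the paper likewise pulls the optimal cover of $X\times Z$ back along $k\times id_{Z}$ and defines the section at $(y,z)$ as the concatenation of the (reversed) homotopy track $\overline{H}(y)$ with $h\circ\sigma_{i}(k(y),z)$, the terminal condition $g(h(x'))=z$ being automatic because $\sigma_{i}$ is a section of $\pi_{g\circ h}^{\kappa_{1},\kappa_{3}}$. You have also put your finger on something the paper hides: its entire proof of part \textbf{ii)} is the sentence ``Similar construction to the first part can be done,'' and, as you observe, the construction is \emph{not} similar --- the mirror transfer $G(x,-)\cdot\bigl(k\circ s_{i}(h(x),z)\bigr)$ ends at a point $k(y')$ with $g(y')=z$, and the hypotheses give no control whatsoever over $g(h(k(y')))$.

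However, your repair of part \textbf{ii)} has a genuine gap. You propose to lift the base path $s_{i}(h(x),z)$ through $h$ by the homotopy lifting property, but $h$ is only assumed to be digitally continuous and surjective with a one-sided homotopy inverse; it is not assumed to be a digital fibration, so there is no HLP to invoke. The fallback ``or of its fibrational substitute'' does not rescue this: the substitute is a fibration $\widehat{h}:(T,\rho)\rightarrow (Y,\kappa_{2})$ together with a homotopy equivalence $X\rightarrow T$, so the lift is a path in $T$, not in $X$, and transporting it back to $X$ through the homotopy equivalence reinstates exactly the endpoint defect you were trying to eliminate. A further warning sign is that in your lifting argument the hypothesis $k\circ h\simeq id_{X}$ is never actually used (saying that $G$ ``guarantees such a lift exists compatibly'' is not a proof step); were the argument valid, it would prove $\text{TC}(g\circ h)^{\kappa_{1},\kappa_{3}}\leq \text{TC}(g)^{\kappa_{2},\kappa_{3}}$ for \emph{every} surjective digital fibration $h$ with no homotopy hypothesis at all. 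In fact the difficulty cannot be overcome under the stated hypotheses, because part \textbf{ii)} as written clashes with the paper's own Example \ref{e3}: take $h:[0,3]_{\mathbb{Z}}\rightarrow[0,1]_{\mathbb{Z}}$ to be the step map of that example, $g=id_{[0,1]_{\mathbb{Z}}}$, and $k:[0,1]_{\mathbb{Z}}\rightarrow[0,3]_{\mathbb{Z}}$ the inclusion. Then $k\circ h\simeq id_{[0,3]_{\mathbb{Z}}}$, since $[0,3]_{\mathbb{Z}}$ is $2$-contractible and hence any two digitally continuous self-maps of it are digitally homotopic, yet $\text{TC}(g\circ h)^{2,2}=\text{TC}(h)^{2,2}=2$ while $\text{TC}(g)^{2,2}=\text{TC}([0,1]_{\mathbb{Z}},2)=1$. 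So either Example \ref{e3} or part \textbf{ii)} is wrong as stated; a correct version of \textbf{ii)} needs extra compatibility, e.g.\ that $h$ is a digital fibration (then your lift works verbatim), or that $k$ is fiber-preserving in the sense $g\circ h\circ k=g$ (then the naive mirror transfer closes up). It is the latter condition that is actually available, via the fiberwise homotopies, in the FHE-invariance corollary for which the paper uses this lemma.
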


\begin{proof}
	\textbf{i)} Let TC$(g \circ h)^{\kappa_{1},\kappa_{3}} = r$. Then there is a covering $\{V_{1},\cdots,V_{r}\}$ of $X \times Z$ for which there exists digitally continuous $s_{i} : V_{i} \rightarrow X^{[0,m]_{\mathbb{Z}}}$ for any integer $m$ such that $\pi_{g \circ h} \circ s_{i} = id_{U_{i}}$ for each $i = \{1,\cdots,r\}$. Suppose that $H : Y \times [0,m]_{\mathbb{Z}} \rightarrow Y$ is a digital homotopy between $k \circ h$ and identity on $Y$. Theorem 3.18 of \cite{MelihKaraca:2020} states that one takes the digitally continuous map $\overline{H}(y) : Y \rightarrow Y^{[0,m]_{\mathbb{Z}}}$ with $\overline{H}(y)(t) = H(y,1-t)$. For each $i$, define
	\begin{eqnarray*}
		t_{i}(y,z) := \overline{H}(y) \cdot (h \circ s_{i}(k(y),z)). 
	\end{eqnarray*}
    Since $\overline{H}$, $h$ and $k$ are digitally continuous, $t_{i}$ is digitally continuous. Moreover, $t_{i}$ is a digitally continuous section on $(k \times id_{Z})^{-1}(U_{i})$ for each $i$. This shows that TC$(g)^{\kappa_{2},\kappa_{3}} \leq r$.
    
    \textbf{ii)} Similar construction to the first part can be done.
\end{proof}

\begin{corollary}
	For a fibration $g$ of digital images, TC$(g)^{\kappa,\lambda}$ is a FHE-invariant.
\end{corollary}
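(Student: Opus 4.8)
The plan is to deduce the corollary directly from Lemma \ref{l2}, using the data packaged in Definition \ref{d1}. Suppose $g_1 : (Y_1,\kappa_1) \to (Z,\lambda)$ and $g_2 : (Y_2,\kappa_2) \to (Z,\lambda)$ are two digital fibrations that are fiber homotopy equivalent; I must show that TC$(g_1)^{\kappa_1,\lambda} = \text{TC}(g_2)^{\kappa_2,\lambda}$. By Definition \ref{d1} there are digital maps $h : Y_1 \to Y_2$ and $k : Y_2 \to Y_1$ with $k \circ h$ digitally fiber homotopic to $id_{Y_1}$ and $h \circ k$ digitally fiber homotopic to $id_{Y_2}$. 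Because $h$ and $k$ are maps over $Z$, they satisfy the fiber-map identities $g_2 \circ h = g_1$ and $g_1 \circ k = g_2$; the first of these is the observation that lets me rewrite $g_1$ as the composite $g_2 \circ h$ and thereby bring Lemma \ref{l2} into play.

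First I would set the lemma's data to be $X = (Y_1,\kappa_1)$, $Y = (Y_2,\kappa_2)$, $Z = (Z,\lambda)$, with the lemma's $h$ equal to our $h$, the lemma's $g$ equal to $g_2$, and the lemma's $k$ equal to our $k$; then $g \circ h = g_2 \circ h = g_1$. Applying part \textbf{i)} with the fiber homotopy $h \circ k \simeq id_{Y_2}$ gives
\[
	\text{TC}(g_1)^{\kappa_1,\lambda} = \text{TC}(g_2 \circ h)^{\kappa_1,\lambda} \geq \text{TC}(g_2)^{\kappa_2,\lambda},
\]
while applying part \textbf{ii)} with the fiber homotopy $k \circ h \simeq id_{Y_1}$ gives the reverse inequality
\[
	\text{TC}(g_1)^{\kappa_1,\lambda} = \text{TC}(g_2 \circ h)^{\kappa_1,\lambda} \leq \text{TC}(g_2)^{\kappa_2,\lambda}.
\]
Combining the two displays yields TC$(g_1)^{\kappa_1,\lambda} = \text{TC}(g_2)^{\kappa_2,\lambda}$, which is exactly the assertion that TC$(g)^{\kappa,\lambda}$ is a digital FHE-invariant.

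The main point requiring care is the verification that the hypotheses of Lemma \ref{l2} are genuinely met by the maps coming from Definition \ref{d1}. Lemma \ref{l2} is stated for \emph{surjective} digitally continuous $h$ and $g$, so I would need to confirm that $h$ (and, for the symmetric application, $k$) can be taken surjective, or that the fiber-homotopy-equivalence data can be arranged so that this holds; since $g_1 = g_2 \circ h$ and $g_2 = g_1 \circ k$ are the given fibrations, both composites are surjective, and the fiber-map property together with the fact that the homotopies respect the projection to $Z$ is what keeps each section construction of Lemma \ref{l2} inside the correct fiber. A secondary bookkeeping step is checking that ``fiber homotopic'' in Definition \ref{d1} supplies exactly the plain digital homotopies $h \circ k \simeq id_{Y_2}$ and $k \circ h \simeq id_{Y_1}$ that Lemma \ref{l2} consumes, since the lemma only uses an ordinary digital homotopy to build the reparametrized path $\overline{H}$, so no strengthening of the hypothesis is needed. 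Everything else is a direct substitution into the already-proved inequalities.
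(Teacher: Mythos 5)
Your proof is correct and takes essentially the same route as the paper: both arguments deduce the corollary by sandwiching the topological complexity of one fibration between two applications of Lemma \ref{l2}, after using the fiber-map identities to rewrite $g_1$ as $g_2 \circ h$ (respectively $g_2$ as $g_1 \circ k$). The only difference is cosmetic --- the paper applies part \textbf{i)} of the lemma twice with the roles of the two fibrations swapped, whereas you fix a single instantiation and invoke parts \textbf{i)} and \textbf{ii)} once each; your explicit remarks on surjectivity and on the fiber-map identities implicit in Definition \ref{d1} only make precise what the paper's proof leaves tacit.
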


\begin{proof}
	Let $g : (Y,\kappa) \rightarrow (Z,\lambda)$ and $h : (Y^{'},\kappa^{'}) \rightarrow (Z,\lambda)$ be two digital fibrations such that there exist $k : (Y,\kappa) \rightarrow (Y^{'},\kappa^{'})$ and $l : (Y^{'},\kappa^{'}) \rightarrow (Y,\kappa)$ digital maps for which both $k \circ l$ and $l \circ k$ are digitally homotopic to the identities. Using Lemma \ref{l2}, we finally get
	\begin{eqnarray*}
		\text{TC}(g)^{\kappa,\lambda} = \text{TC}(h \circ k)^{\kappa,\lambda} \geq \text{TC}(h)^{\kappa^{'},\lambda} = \text{TC}(g \circ h)^{\kappa^{'},\lambda} \geq \text{TC}(g)^{\kappa,\lambda}.
	\end{eqnarray*}
\end{proof}

\quad Recall that a digital $n-$sphere \cite{Boxer:2006} is the digital image $[-1,1]_{\mathbb{Z}}^{n+1} \setminus \{0_{n+1}\}$, where $0_{n+1}$ is the origin of $\mathbb{Z}^{n+1}$.

\begin{corollary}
	Let $Y$ be a digital $n-$sphere. Then for the antipodal map of digital images $h : (Y,\kappa) \rightarrow (Y,\kappa)$ with $h(y) = -y$, we have that TC$(h)^{\kappa,\kappa} = \text{TC}(Y,\kappa)$.
\end{corollary}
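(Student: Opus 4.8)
The plan is to recognize the antipodal map $h$ as a self-inverse digital isomorphism and then to invoke the fiber-homotopy-equivalence invariance of $\text{TC}$ established in the corollary just above, thereby reducing the computation of $\text{TC}(h)^{\kappa,\kappa}$ to that of $\text{TC}(id_Y)^{\kappa,\kappa}$, which the paper already identifies with $\text{TC}(Y,\kappa)$ immediately after the definition of $\text{TC}(g)$.

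First I would check that $h$ is well defined and is an isomorphism. Since $Y = [-1,1]_{\mathbb{Z}}^{n+1}\setminus\{0_{n+1}\}$ and negation sends $[-1,1]_{\mathbb{Z}}^{n+1}$ to itself while fixing only the excluded origin, $h(y)=-y$ is a bijection of $Y$. Because negating both points of a pair preserves every coordinate difference $|u_j - v_j|$, we have $u \leftrightarrow_{\kappa} v$ if and only if $-u \leftrightarrow_{\kappa} -v$, so $h$ is $(\kappa,\kappa)$-continuous; and since $h \circ h = id_Y$, the inverse of $h$ is $h$ itself, which is therefore also continuous. Thus $h$ is a $(\kappa,\kappa)$-isomorphism, and being an isomorphism it enjoys the digital homotopy lifting property (one lifts any homotopy simply by composing with $h^{-1}=h$), so $h$ is a digital fibration, as is $id_Y$.

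Next I would fit $id_Y$ and $h$ into the fiber-homotopy-equivalence framework of Definition \ref{d1} over the common base $(Y,\kappa)$. Taking both structural maps to be $h$ itself, one has $h \circ h = id_Y$ and $id_Y \circ h = h$, so the base-compatibility conditions hold exactly, while the two round-trip composites both equal $id_Y$ and hence are trivially digitally homotopic to the identity via the constant homotopy. Therefore $id_Y$ and $h$ are fiber homotopy equivalent digital fibrations, and the FHE-invariance corollary yields $\text{TC}(h)^{\kappa,\kappa} = \text{TC}(id_Y)^{\kappa,\kappa} = \text{TC}(Y,\kappa)$. Alternatively, one can bypass that corollary and apply Lemma \ref{l2} directly with $g = id_Y$, the lemma's $h$ taken to be the antipodal map, and $k$ also the antipodal map: part (i) with $h \circ k = id_Y$ gives $\text{TC}(h)^{\kappa,\kappa} \geq \text{TC}(Y,\kappa)$, and part (ii) with $k \circ h = id_Y$ gives the reverse inequality.

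The routine content here is light; the only points needing care are confirming that $h$ is genuinely a digital fibration, so that the machinery of the preceding corollary applies, and that the composition and base-compatibility hypotheses are satisfied. Both reduce to the single identity $h \circ h = id_Y$, which collapses every homotopy in sight to an equality, so I expect no real obstacle beyond keeping the adjacency relations $\kappa$ and $\kappa_{\ast}$ consistent across the evaluation fibration $\pi_h^{\kappa,\kappa}$.
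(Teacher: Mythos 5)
Your proposal is correct, and your ``alternative'' route is precisely the paper's own proof: the paper applies both parts of Lemma \ref{l2} with the antipodal map playing the roles of $g$, $h$, and $k$, using $h \circ h = id_{Y}$ to obtain the two inequalities $\text{TC}(h)^{\kappa,\kappa} \leq \text{TC}(h \circ h)^{\kappa,\kappa} = \text{TC}(Y,\kappa)$ and $\text{TC}(h)^{\kappa,\kappa} \geq \text{TC}(h \circ h)^{\kappa,\kappa} = \text{TC}(Y,\kappa)$. Your primary route through the FHE-invariance corollary is not genuinely different, since that corollary is itself deduced from Lemma \ref{l2} by the same two-inequality argument, so both packagings rest on the single identity $h \circ h = id_{Y}$ exactly as in the paper.
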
 
    
\begin{proof}
	Since $h \circ h = id_{Y}$, $h$ is a right and left digital homotopy inverse of itself. Hence, Lemma \ref{l2} gives that 
	\begin{eqnarray*}
		\text{TC}(h)^{\kappa,\kappa} \leq \text{TC}(h \circ h)^{\kappa,\kappa} = \text{TC}(id_{Y})^{\kappa,\kappa} = \text{TC}(Y,\kappa)
	\end{eqnarray*}
	and
	\begin{eqnarray*}
		\text{TC}(h)^{\kappa,\kappa} \geq \text{TC}(h \circ h)^{\kappa,\kappa} = \text{TC}(id_{Y})^{\kappa,\kappa} = \text{TC}(Y,\kappa).
	\end{eqnarray*}
	This concludes that TC$(h)^{\kappa,\kappa} = \text{TC}(Y,\kappa)$.
\end{proof}
\section{Conclusion}
\label{sec:3}
\quad TC$(Y,\kappa)$ is improved upon TC$(g)^{\kappa,\lambda}$ for $g$ from $Y$ with $\kappa-$adjacency to $Z$ with $\lambda-$adjacency in this paper. In other words, TC$(Y,\kappa)$ is TC$(g)^{\kappa,\lambda}$ if the map is identity on $Y$. First, we state the definition of TC$(g)^{\kappa,\lambda}$. Then we compare our results with TC or cat of the domain or the range of a digital map. We observe the results by taking different adjacency relations on the domain or the range of a map. With important certain examples, we give some counterexamples in digital images. 

\quad The higher topological complexity TC$_{n}$ of a space \cite{Rudyak:2010} is a general version of a topological complexity of the space. In topological spaces (or digital images), the construction of the (digital) higher topological complexity of a map is still an open problem. The notion (digital) Schwarz genus has not yet been able to provide a solution to the problem. Recalling that TC of a space has more than one definitions, studies may tend to use another methods.  

\acknowledgment{This work was partially supported by Research Fund of the Ege University (Project Number: FDK-2020-21123). In addition, the first author is granted as fellowship by the Scientific and Technological Research Council of Turkey TUBITAK-2211-A.}

\end{document}